\newcolumntype{B}[1]{>{\begin{minipage}[b]{#1}\raggedright{}}c<{\end{minipage}\minrowheight}}
\newcommand{\minrowheight}{\rule{0pt}{8ex}\relax}
\def\algbackskip{\hskip-\ALG@thistlm}
\theoremstyle{plain}
\newtheorem{Thm}{Theorem}
\newtheorem{Lem}[Thm]{Lemma}
\newtheorem{Cor}[Thm]{Corollary}
\theoremstyle{definition}
\newtheorem{Ex}[Thm]{Example}
\newtheorem{Def}[Thm]{Definition}
\theoremstyle{remark}
\def\@tocline#1#2#3#4#5#6#7{\relax
  \ifnum #1>\c@tocdepth 
  \else
    \par \addpenalty\@secpenalty\addvspace{#2}%
    \begingroup \hyphenpenalty\@M
    \@ifempty{#4}{%
      \@tempdima\csname r@tocindent\number#1\endcsname\relax
    }{%
      \@tempdima#4\relax
    }%
    \parindent\z@ \leftskip#3\relax \advance\leftskip\@tempdima\relax
    \rightskip\@pnumwidth plus4em \parfillskip-\@pnumwidth
    #5\leavevmode\hskip-\@tempdima
      \ifcase #1
       \or\or \hskip 1em \or \hskip 2em \else \hskip 3em \fi%
      #6\nobreak\relax
    \hfill\hbox to\@pnumwidth{\@tocpagenum{#7}}\par
    \nobreak
    \endgroup
  \fi}
\numberwithin{Thm}{section}
\newcommand{\Hom}{\operatorname{Hom}}
\newcommand{\Bl}{\operatorname{Bl}}
\newcommand{\Nef}{\operatorname{Nef}}
\newcommand{\Eff}{\operatorname{Eff}}
\newcommand{\Mov}{\operatorname{Mov}}
\newcommand{\Proj}{\operatorname{Proj}}
\newcommand{\Cl}{\operatorname{Cl}}
\newcommand{\Cox}{\operatorname{Cox}}
\newcommand{\Spec}{\operatorname{Spec}}
\newcommand{\GL}{\operatorname{GL}}
\newcommand{\Int}{\operatorname{Int}}
\newcommand{\wt}{\operatorname{wt}}
\newcommand\rat{\dashrightarrow}
\newcommand\actL[1]{\multirow{2}{*}{$#1:$} \close}
\newcommand\actR[1]{\close \multirow{2}{*}{\bigg)#1}}
\newcommand\lBr{\rule{0pt}{2.5ex} \multirow{2}{*}{\bigg(} \close}
\newcommand\close{\!\!\!\!}
\title{Sarkisov links from toric weighted blowups of $\mathbb{P}^3$ and $\mathbb{P}^4$ at a point}
\author{Tiago Duarte Guerreiro}
\date{}
\begin{document}

\maketitle

\begin{abstract}
    We study Sarkisov links initiated by the toric weighted blowup of a point in $\mathbb{P}^3$ or $\mathbb{P}^4$ using variation of GIT. We completely classify which of these initiate Sarkisov links and describe the links explicitly. Moreover, if $X$ is the weighted blowup of $\mathbb{P}^d$ at a point, we give a simple criterion in terms of the weights of the blowup that characterises when $X$ is weak Fano.
    
\end{abstract}


\section{Introduction}

The aim of this paper is to explicitly describe all possible toric Sarkisov links initiated by a toric weighted blowup of a point in $\mathbb{P}^3$ or $\mathbb{P}^4$. We do this using variation of GIT (vGIT). According to the Sarkisov program, see \cite{CortiSP,SP}, a birational map $X\dashrightarrow Y/S $ from a Fano variety of Picard rank 1 to a Mori fibre space can be decomposed into a finite sequence of elementary Sarkisov links starting with the blowup of a centre in $X$. Therefore it is natural to try to understand Sarkisov links explicitly and a lot of work has been done in that direction. See \cite{CPR, cortimella, 2raybrown, okadaI, righyp, CampoSarkisov, okadacod4,mythesis, erik, HamidVanyaJihun}.

In \cite{LamyBlanc}, Blanc and Lamy classify the curves in $\mathbb{P}^3$ whose blowup $\varphi \colon X \rightarrow \mathbb{P}^3$ produces a weak Fano 3-fold. Then $X$ is a smooth manifold of Picard rank 2 and hence equipped with two extremal contractions: One for which $-K_X$ is relatively ample and another one associated with the linear system $|-mK_X|$, where $m\gg 0$. The latter is a small contraction if the movable cone of $X$ is strictly larger than the Nef cone of $X$. In that case there is a Sarkisov link, which the authors describe.

In this paper we take a similar approach. Let $Y=\mathbb{P}^3$ or  $\mathbb{P}^4$. Given a toric weighted blowup of a point  $\varphi \colon T \rightarrow Y$ we decide whether $T$ is in the Mori category of terminal $\mathbb{Q}$-factorial varieties. If it is, we compute the decomposition of the movable cone of $T$ into Nef chambers. Notice that $T$ does not need to be a weak Fano for a Sarkisov link to exist. In fact, we only need that the class of $-K_T$ is in the interior of the movable cone of $T$.

Our main results are the following:
\begin{Thm}
Let $\varphi \colon T \rightarrow \mathbb{P}^3$ be the toric $(1,a,b)$-weighted blowup of a point. Then $\varphi$ initiates a toric Sarkisov link from $\mathbb{P}^3$ if and only if, up to permutation, 
\[
(a,b) \in \{(1,1), (1,2), (2,3), (2,5) \}.
\]
\end{Thm}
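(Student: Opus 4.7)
The plan is to describe $T$ and the models adjacent to it via variation of GIT on its Cox ring, and then to check the Mori-category hypothesis at every stage of the resulting 2-ray game.

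First, one writes $\Cox(T) = k[x_0, x_1, x_2, x_3, t]$ graded by $\Cl(T) \cong \mathbb{Z}^2 = \mathbb{Z}\langle H, E\rangle$, with
\[
\deg(x_0, x_1, x_2, x_3, t) = \bigl((1,0), (1,-1), (1,-a), (1,-b), (0,1)\bigr),
\]
where $H = \varphi^*\mathcal{O}_{\mathbb{P}^3}(1)$ and $E$ is the exceptional divisor; a direct computation gives $-K_T = (4, -(a+b))$. Assuming $a \leq b$, the effective cone is spanned by $(0,1)$ and $(1,-b)$, and intersecting with the subcones obtained by removing $t$ and $x_3$ shows $\Mov(T) = \operatorname{cone}((1,0),(1,-a))$. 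The two non-smooth torus-fixed points on the exceptional $E \cong \mathbb{P}(1,a,b)$ contribute cyclic quotient singularities of types $\tfrac{1}{a}(1,-1,b)$ and $\tfrac{1}{b}(1,-1,a)$; by the Morrison--Stevens classification, $T$ is terminal if and only if $\gcd(a,b)=1$. The condition $-K_T \in \operatorname{int}(\Mov(T))$ translates to $b < 3$ when $a = 1$ and $b < 3a$ when $a \geq 2$.

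Next I run the 2-ray game. For $a = 1$ the cone $\Mov(T)$ coincides with $\Nef(T)$; the opposite boundary contraction at $(1,-1)$, realized by the linear system $\langle x_1, x_2, x_3 t^{b-1}\rangle$, is a Mori fibration $T \to \mathbb{P}^2$, accounting for the links initiated by $(1,1)$ and $(1,2)$. For $a \geq 2$ the unique interior wall is $(1,-1) = \deg x_1$, so the game begins with a small modification $T \dashrightarrow T^+$ that, at the level of fans, exchanges the 2-face $\langle e_2, e_3\rangle$ for $\langle e_0, v\rangle$, where $v = e_1 + a e_2 + b e_3$ is the ray added by the weighted blowup. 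The two new maximal cones $\langle e_0, v, e_2\rangle$ and $\langle e_0, v, e_3\rangle$ of $T^+$ carry cyclic quotient singularities that a short lattice computation identifies as
\[
\tfrac{1}{b-1}(1,1,1-a) \qquad \text{and} \qquad \tfrac{1}{a-1}(1,1,1-b).
\]

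The crux of the argument, and its main obstacle, is to check that $T^+$ remains in the Mori category. Using the classification fact that $\tfrac{1}{r}(1,1,c)$ is terminal exactly when $r \leq 2$ or $c \equiv -1 \pmod{r}$, a short case check shows that simultaneous terminality of both singularities of $T^+$ forces $a = 2$. Combined with $\gcd(a,b) = 1$ and $b < 3a = 6$ this leaves $b \in \{3, 5\}$, and a computation analogous to the $a = 1$ case shows that the second boundary contraction of $T^+$ at $(1,-2)$, defined by $\langle x_2, x_3 t^{b-2}\rangle$, is a Mori fibration $T^+ \to \mathbb{P}^1$. Together with the two cases coming from $a = 1$, this yields the claimed list $\{(1,1),(1,2),(2,3),(2,5)\}$; pairs such as $(3,4)$, $(3,5)$, $(4,5),\ldots$, which survive both the terminality of $T$ and the movability of $-K_T$, are eliminated precisely at the flip step because $T^+$ acquires a non-terminal cyclic quotient singularity.
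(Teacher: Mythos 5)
Your strategy is essentially the paper's: compute $\Mov(T)$ from the bigraded Cox ring, demand $-K_T\in\Int\Mov(T)$ (giving $b<3a$, resp.\ $b<3$ for $a=1$), and test terminality of the single small modification across the interior wall $(1,-1)$; your identification of the two new cyclic quotient points as $\frac{1}{b-1}(1,1,1-a)$ and $\frac{1}{a-1}(1,1,1-b)$ agrees with the paper's $P_2,P_1$, and your observation that the $\frac{1}{b-1}$ point alone kills every $a\ge 3$ (since $b-1\mid a-2$ is impossible for $1\le a-2<b-1$) is a slightly cleaner route to $a=2$ than the paper's two-case analysis. So the classification itself comes out correctly and by the same mechanism.

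There is, however, a concrete error in your description of the far end of the links, affecting three of the four cases. For $(1,1,2)$ the class $H-E$, and for $(1,2,3)$, $(1,2,5)$ the class $H-2E$, lie in the \emph{interior} of $\Eff(T)=\mathbb{R}_+[E]+\mathbb{R}_+[H-bE]$, hence are big, so the contraction at the far boundary of $\Mov$ is birational, not a Mori fibration. Your maps to $\mathbb{P}^2$ and $\mathbb{P}^1$ use only the degree-one sections $\langle x_1,x_2,x_3t^{b-1}\rangle$, resp.\ $\langle x_2,x_3t^{b-2}\rangle$, whereas the relevant contraction is $\Proj\bigoplus_m H^0(m(H-aE))$, whose ring acquires extra generators such as $x_0x_3$ and $x_1x_3$; the actual outputs are a divisorial contraction to $\mathbb{P}^1\subset\mathbb{P}(1,1,1,2)$ for $(1,2)$ and divisorial contractions to a point of $\mathbb{P}(1,b,b-1,b-2)$ for $(2,3),(2,5)$ (a $(1,1,2)$-blowup of a smooth point, resp.\ a Kawamata blowup of $\frac13(1,1,2)$), as in the paper's Lemma on the $(1,\dots,1,b)$ case and the Corollary following the theorem. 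A fibration only occurs when the boundary ray of $\Mov(T)$ meets the boundary of $\Eff(T)$, i.e.\ for the ordinary blowup $(1,1)$. This misidentification does not change the list, because existence of the link already follows from the criterion you implicitly use (anticanonical class interior to the movable cone plus terminality of all small modifications, \cite[Lemma~2.9]{hamidquartic}); but as written your justification of the ``if'' direction for $(1,2),(2,3),(2,5)$ rests on a false statement and should be repaired either by citing that criterion or by computing the full ample models. A minor additional caveat: your terminality test ``$\frac1r(1,1,c)$ is terminal iff $r\le 2$ or $c\equiv-1\pmod r$'' needs $\gcd(c,r)=1$ when $r=2$ (otherwise the singularity is non-isolated); this does not affect the outcome here since the $\frac{1}{b-1}$ point already eliminates $a=3$.
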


\begin{Thm} \label{thm:2}
Let $\varphi \colon T \rightarrow \mathbb{P}^4$ be the toric $(a,b,c,d)$-weighted blowup of a point. Then $\varphi$ initiates a toric Sarkisov link from $\mathbb{P}^4$ if and only if $(a,b,c,d)$ is one of 421 quadruples up to permutation.
\end{Thm}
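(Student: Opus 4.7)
The plan is to follow the vGIT strategy used in Theorem 1, adapted to one higher dimension. Realize $T$ as a GIT quotient of $\mathbb{A}^6 \setminus Z$ by $\mathbb{G}_m^2$, where the Cox ring $k[x_0, x_1, x_2, x_3, x_4, E]$ carries the $\mathbb{Z}^2$-grading assigning degree $(1,0)$ to $x_0$, degrees $(1,-a), (1,-b), (1,-c), (1,-d)$ to $x_1, x_2, x_3, x_4$ respectively, and degree $(0,1)$ to $E$. A direct computation yields $[-K_T] = (5,\, 1-a-b-c-d)$, and both the effective cone and the movable cone of $T$ can be read off from the six columns of the weight matrix as explicit functions of $(a,b,c,d)$.

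Then impose the two conditions needed for $\varphi$ to initiate a toric Sarkisov link. First, $T$ must lie in the Mori category: since it is a simplicial toric variety it is automatically $\mathbb{Q}$-factorial, so the only nontrivial requirement is terminality, which by Reid's combinatorial criterion for cyclic quotient singularities translates into an explicit arithmetic condition on the weights. Second, $[-K_T]$ must lie in the interior of $\Mov(T)$; reading off the extremal rays of $\Mov(T)$ from the weight matrix, this condition becomes two strict linear inequalities in $(a,b,c,d)$. Together these conditions cut out a bounded region in $\mathbb{Z}_{>0}^4/S_4$, since as any weight grows the class $[-K_T]$ eventually leaves the movable cone.

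The final step is an exhaustive enumeration of quadruples in this bounded region, which produces the $421$ admissible solutions (modulo the $S_4$-action of permuting the weights). For each such quadruple, the link itself is described by crossing the interior walls of $\Mov(T)$ under vGIT and identifying the resulting Mori fibre space or second divisorial contraction. The main obstacle is the boundedness step: one must make the bounds on $(a,b,c,d)$ sharp enough that the enumeration is feasible, and then case-check the terminality condition for each candidate. The conceptual content is concentrated in the explicit description of $\Mov(T)$ and its Nef chambers in terms of $(a,b,c,d)$; once this is in place, the remaining verification is essentially bookkeeping.
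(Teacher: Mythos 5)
There is a genuine gap: you only impose (i) terminality of $T$ and (ii) $[-K_T]\in\Int\Mov(T)$, but these are not the criterion for $\varphi$ to initiate a Sarkisov link. The criterion used in the paper (via \cite[Lemma~2.9]{hamidquartic}) also requires that \emph{every} small $\mathbb{Q}$-factorial modification obtained by crossing the interior walls of $\Mov(T)$ is again terminal, and that the end model of the 2-ray game is in the Mori category. Checking terminality of the loci extracted by these antiflips (curves and points of type $\frac{1}{c-1}(-1,-1,b-1)$, etc.) is where essentially all of the case analysis lies, and omitting it changes the answer: already for $\mathbb{P}^3$ the $(1,3,4)$-blowup satisfies both of your conditions, yet the flipped model $T'$ acquires a canonical non-terminal $\frac{1}{3}(1,1,1)$ point and no link exists (Example \ref{ex:nontermflip}). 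In your step ``crossing the interior walls \ldots and identifying the resulting Mori fibre space'' you treat the wall crossings as a description to be recorded after enumeration, not as constraints to be verified, so your enumeration would not produce the set of 421 quadruples.

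Moreover, your boundedness claim fails: conditions (i)--(ii) alone cut out an \emph{infinite} set of quadruples, so the ``exhaustive enumeration of a bounded region'' cannot start. For instance, the $(1,1,c,c+1)$-blowups have $T$ terminal (the vertex singularity is $\frac{1}{2c+1}(1,1,c,-c)$, which is terminal for all $c$) and $-cK_T=(3c-1)H+(2c+1)(H-cE)$ lies in $\Int\Mov(T)$ for every $c$, yet by Lemma \ref{lem:auxP42} only finitely many of these give links, the obstruction being non-terminality of the flipped model. The paper obtains finiteness by a different mechanism: since the end model must be a terminal $\mathbb{Q}$-factorial Fano weighted projective space $\mathbb{P}(1,d-c,d-b,d-a,d)$ (or a fibration/divisorial contraction handled separately), Kasprzyk's classification \cite[Theorem~3.5]{WPS4} gives a finite list of candidates, and for each one terminality of $T$ and of all intermediate models $T_1,T_2$ is checked algorithmically (Algorithms \ref{terminalBlowupQ} and \ref{terminalWPSQ}). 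You would need to add both the wall-crossing terminality conditions and a genuine source of boundedness (such as the appeal to \cite{WPS4}) for the proposal to go through.
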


 Throughout this paper we only consider toric  $(w_1,\ldots,w_d)$-weighted blowups. In particular, we do not describe all blowups which are locally analytically $(1,a,b)$-weighted blowups as in Theorem \ref{thm:kawakita}.

All the links are explicitly described. Given the number of cases in Theorem \ref{thm:2}, we use the help of a Maple program written by the author. See Section \ref{sec:code}.

\paragraph{Acknowledgements} The author would like to thank Hamid Abban, Ivan Cheltsov and Erik Paemurru for many conversations and interest. This work has been supported by the EPSRC grant ref.\ EP/V048619/1.

\section{Preliminaries}

\paragraph{Weighted blowups and terminal singularities.}

We recall the definition of weighted blowups:
\begin{Def}
Let $\alpha =(\alpha_1,\ldots,\alpha_n)$ be positive integers and define the $\mathbb{C}^*$-action on $\mathbb{C}^{n+1}$ by $\lambda \cdot (u,x_1,\ldots,x_n) = (\lambda^{-1}u,\lambda^{\alpha_1}x_1,\ldots,\lambda^{\alpha_n}x_n)$. Let $T=(\mathbb{C}^{n+1}\setminus \mathcal{Z}(x_1,\ldots,x_n))/\mathbb{C}^*$. Then, the morphism  $\varphi \colon T \rightarrow \mathbb{C}^n$ given by $(u,x_1,\ldots,x_n) \mapsto (u^{\alpha_1}x_1,\ldots,u^{\alpha_n}x_n)$ is called the \textbf{weighted blowup} of $\mathbb{C}^n$ at the origin. 
\end{Def}

The following theorem says that in dimension three, a divisorial extraction centred at a smooth point to a terminal $\mathbb{Q}$-factorial variety is a weighted blowup.

\begin{Thm}[{\cite[Thm.~2.2]{kawakita}}] \label{thm:kawakita}
Let $Y$ be a $\mathbb{Q}$-factorial normal variety of dimension three with only terminal singularities, and let $f \colon (Y \supset E ) \rightarrow ( X \ni P)$ be an algebraic germ of a divisorial contraction which contracts its exceptional divisor $E$ to a smooth point $P$. Then $f$ is a weighted blow-up. More precisely, we can take local coordinates $x, y, z$ at $P$ and coprime positive integers $a$ and $b$, such that $f$ is the weighted blow-up of $X$ with its weights $\wt(x, y, z) = (1, a, b)$ We call $f$ a \textbf{Kawakita blowup} of $p$.
\end{Thm}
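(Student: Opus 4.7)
The plan is to reduce to a formal/analytic local problem near $P\in X$, so we may assume $X=\operatorname{Spec}\mathbb{C}[\![x,y,z]\!]$ is a smooth germ, and then to identify $f$ by means of the divisorial valuation $v_E$ cut out on the function field by the exceptional divisor $E$. The key numerical invariant driving everything is the discrepancy $a(E,X)$: it is a positive integer since $X$ is smooth and $Y$ is terminal, and for a weighted blowup with weights $(\alpha_1,\alpha_2,\alpha_3)$ it equals $\alpha_1+\alpha_2+\alpha_3-1$, so controlling it controls the possible weights.

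First I would analyse the geometry of $E$ and of $Y$ along $E$. Because $Y$ is $\mathbb{Q}$-factorial terminal with $\rho(Y/X)=1$ and $-E$ is $f$-ample, $E$ is irreducible and the singularities of $Y$ supported on $E$ are isolated three-dimensional terminal singularities. Combined with the fact that $E$ is a Mori-fibre-space-like object contracted to a point, one shows using Mori's classification of terminal 3-fold singularities together with an analysis of $(E,\mathrm{Diff})$ via adjunction that $E\cong\mathbb{P}(1,a,b)$ for some coprime $a,b\geq 1$, and that $Y$ has at worst cyclic quotient singularities of types $\tfrac{1}{a}(1,-1,b)$ and $\tfrac{1}{b}(1,a,-1)$ at the two non-Gorenstein points of $E$.

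Next I would study $v_E$ via the associated graded ring $\mathrm{gr}_{v_E}\mathbb{C}[\![x,y,z]\!]=\bigoplus_{n\geq 0}\{g:v_E(g)\geq n\}/\{g:v_E(g)\geq n+1\}$. The previous step identifies this graded ring, after choosing generic elements of $\mathfrak{m}_P$ of minimal $v_E$-value, with the homogeneous coordinate ring of $\mathbb{P}(1,a,b)$. Picking lifts $x,y,z$ of these generators yields a monomial valuation of weights $(1,a,b)$, and the induced birational map from the toric $(1,a,b)$-weighted blowup $\widetilde{X}\to X$ to $Y$ is an isomorphism over the generic point of $E$; since both sides are $\mathbb{Q}$-factorial extractions of the same valuation with relative Picard rank one, they must agree.

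The main obstacle is showing that one of the weights can be taken to be $1$, i.e.\ that the valuation $v_E$ admits a monomial lift with a weight-one coordinate. If all weights were $\geq 2$, the singularities forced on $E$ would contradict the classification of terminal 3-fold cyclic quotients (they would force a non-terminal or non-isolated point); equivalently, the discrepancy inequality coming from singular Riemann--Roch on $E\cong\mathbb{P}(\alpha_1,\alpha_2,\alpha_3)$ combined with terminality of $Y$ excludes the case $\min\alpha_i\geq 2$. The second delicate point is producing the coordinate $x$ with $v_E(x)=1$: this uses that the formal completion along $E$ is determined by $v_E$ together with its graded pieces, and that $P$ being smooth provides enough regular functions of $v_E$-value one to generate the required weight-one factor of $\mathrm{gr}_{v_E}$. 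Together, these yield the weighted-blowup structure with weights $(1,a,b)$.
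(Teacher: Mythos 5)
This theorem is not proved in the paper at all: it is quoted from Kawakita's work, so the only meaningful comparison is with Kawakita's original argument, and against that your sketch has a genuine gap. The two middle steps of your plan assume what is essentially the content of the theorem. First, the claim that $E\cong\mathbb{P}(1,a,b)$ with $Y$ having exactly two cyclic quotient points of types $\frac{1}{a}(1,-1,b)$ and $\frac{1}{b}(1,a,-1)$ does not follow from ``Mori's classification of terminal singularities plus adjunction for $(E,\Diff)$'': terminality of $Y$ and $\rho(Y/X)=1$ give irreducibility of $E$ and isolated singularities of $Y$, but they do not by themselves give normality of $E$, the quotient-singularity structure along $E$, or the identification of $E$ as a weighted projective plane — establishing this is as hard as the theorem itself. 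Second, even granting $E\cong\mathbb{P}(1,a,b)$ as an abstract variety, the assertion that $\mathrm{gr}_{v_E}\mathbb{C}[\![x,y,z]\!]$ is the weighted polynomial ring in three variables of weights $(1,a,b)$ is precisely the statement that $v_E$ is a monomial valuation in suitable coordinates; knowing the isomorphism type of $E$ (i.e.\ of $\operatorname{Proj}$ of that graded ring) does not determine the graded ring, nor does it produce the crucial coordinate of value $1$. You flag these as ``obstacles,'' but the mechanisms you propose (a contradiction with the classification of terminal cyclic quotients if all weights are $\geq 2$, and ``enough regular functions of value one'' from smoothness of $P$) are not arguments — they are restatements of what must be proved.

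What actually closes these gaps in Kawakita's proof is a careful numerical analysis: one compares, via Reid's singular Riemann--Roch on $Y$, the dimensions of the graded pieces of the valuation filtration $\{g\in\mathcal{O}_{X,P}: v_E(g)\geq n\}$ with those of the $\mathfrak{m}_P$-adic filtration, obtaining strong constraints on the discrepancy $a(E,X)$ and on the (fictitious) basket of singularities of $Y$; only after these constraints are in place can one construct coordinates $x,y,z$ with prescribed values $1,a,b$ and verify that the valuation ideals are monomial, whence $Y$ agrees with the toric $(1,a,b)$-blowup by the (correct, and standard) uniqueness argument you give at the end. So your final step is fine, but the core of the proof — forcing monomiality and the weight $1$ — is missing rather than merely deferred.
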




We recall the definition of a cylic quotient singularity. Let $\bm{\mu}_r$ be the cyclic group of $r$th roots of unity. Define the action of $\bm{\mu}_r$ on $\mathbb{C}^n$ by
\begin{align*}
\bm{\mu}_r\times \mathbb{C}^n &\longrightarrow \mathbb{C}^n \\
(\epsilon, (x_1,\ldots,x_n)) &\longmapsto (\epsilon^{a_1}x_1,\ldots,\epsilon^{a_n}x_n)
\end{align*} 
where $\epsilon$ is a primitive $r$ root of unity and $a_i$ are integers. The cyclic quotient singularity associated to the action above is the quotient $X=\mathbb{C}^n/\bm{\mu}_r$ which we denote by
\[
\frac{1}{r}(a_1,\ldots,a_n).
\] 
In dimension three, terminal extractions centred at terminal cyclic quotient singularities are also classified:
\begin{Thm}[\cite{kawamata}] \label{thm:kawamata}
Let $p\sim \frac{1}{r}(1,a,r-a)$ be a germ of a 3-fold terminal quotient singularity and $f \colon E \subset Y \rightarrow  \Gamma \in X$ a divisorial contraction centred at $\Gamma \ni p$, then $\Gamma = p$ and $f$ is the $(1,a,r-a)$-weighted blowup of $p$. We call $f$ the \textbf{Kawamata blowup} of $p$.
\end{Thm}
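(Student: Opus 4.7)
The plan is to work on the canonical $\bm{\mu}_r$-cover and use the theory of discrepancies, following Kawamata's classical approach. The singularity $p \sim \frac{1}{r}(1,a,r-a)$ is the quotient of a smooth germ $(\mathbb{C}^3,0)$ by the $\bm{\mu}_r$-action with weights $(1, a, r-a)$, free outside the origin; pulling back $f$ along the cover $\pi : (\mathbb{C}^3, 0) \to (X, p)$ gives a $\bm{\mu}_r$-equivariant divisorial contraction $\tilde f : \tilde Y \to \mathbb{C}^3$, and the exceptional locus of $\tilde f$ is the preimage of $E$.

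First I would pin down the discrepancy $a(E,X) = 1/r$. Since the Cartier index of $K_X$ at $p$ is $r$, the discrepancy of any exceptional divisor over $X$ centred above $p$ lies in $\frac{1}{r}\mathbb{Z}_{>0}$, so $a(E,X) \geq 1/r$. The minimum value $1/r$ is realised by the exceptional divisor of the toric $(1,a,r-a)$-weighted blowup---Reid's economic resolution. If $a(E,X) > 1/r$, then comparing $Y$ with the economic resolution $Z \to X$, the induced birational map $Y \dashrightarrow Z$ must extract the economic exceptional divisor, and a negativity-lemma argument on the common resolution forces $Y$ to have non-terminal singularities, contradicting the hypothesis.

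Next I would use $a(E,X) = 1/r < 1$ to conclude $\Gamma = p$. The singular locus of $X$ is the single point $p$; if $\Gamma$ were positive-dimensional, then at a general point $q \in \Gamma \setminus \{p\}$ we could identify $f$ analytically with a divisorial extraction from a smooth 3-fold germ centred on a smooth subvariety, whose exceptional discrepancy is at least $1$. Since $a(E,X)$ is a single rational number independent of the point, this contradicts $a(E,X) = 1/r$.

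Finally, I would identify $f$ as the weighted blowup via uniqueness of the valuation. On the cover, $v_{\tilde E}$ is a $\bm{\mu}_r$-invariant divisorial valuation on $\mathbb{C}^3$ centred at the origin realising the minimal equivariant discrepancy; since the $\bm{\mu}_r$-action has diagonal weights $(1, a, r-a)$, a direct computation shows that the only such valuation is the monomial one with the same weights. As a divisorial contraction is determined by its exceptional valuation, $f$ coincides with the $(1, a, r-a)$-weighted blowup. The main obstacle is the minimality step in the second paragraph: rigorously excluding all intermediate divisorial valuations between the economic resolution and the identity forms the technical heart of Kawamata's original argument, and relies on the fine classification of terminal 3-fold cyclic quotient singularities.
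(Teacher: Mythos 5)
The paper does not prove this statement at all: Theorem \ref{thm:kawamata} is imported verbatim from \cite{kawamata}, so your sketch can only be judged against the standard argument, not against anything internal to the paper. Judged on its own terms, it has a genuine gap, and you name it yourself: the ``minimality/uniqueness'' step is left as ``the technical heart''. Concretely, your second paragraph only derives a contradiction when $a(E,X)>1/r$; if $a(E,X)=1/r$ but $v_E$ is not the monomial valuation, that paragraph says nothing, and everything then rests on the claim in your last paragraph that ``a direct computation'' shows the only $\mu_r$-invariant valuation on $\mathbb{C}^3$ centred at the origin with minimal equivariant discrepancy is the monomial one with weights $(1,a,r-a)$. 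That is not a direct computation, and as formulated it is not even the right invariant: discrepancy over the cover does not distinguish the candidates. For example over $\frac{1}{5}(1,2,3)$ the monomial valuations with weights $(1,2,3)$ and $(1,1,4)$ are both $\mu_5$-invariant, centred at the origin, and have discrepancy $5$ over $\mathbb{C}^3$; what separates them is the ramification index $e$ in $a(\tilde F,\mathbb{C}^3)+1=e\,(a(F,X)+1)$, i.e.\ how they descend to the quotient. So the cover reduces nothing and the crux is untouched.

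The standard proof closes this gap without the cover and in one stroke. Let $E_0$ be the exceptional divisor of the $(1,a,r-a)$-weighted blowup, so $a(E_0,X)=1/r$. Since $rK_X$ is Cartier and $X$ is terminal, $a(E,X)\geq 1/r$. The centre of $E_0$ on $Y$ lies in $f^{-1}(p)\subseteq E$, hence $v_{E_0}(E)\geq 1$. If $v_{E_0}\neq v_E$, then $E_0$ is exceptional over $Y$ and
\[
a(E_0,Y)\;=\;a(E_0,X)-a(E,X)\,v_{E_0}(E)\;\leq\;\tfrac{1}{r}-\tfrac{1}{r}\;=\;0,
\]
contradicting terminality of $Y$. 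Therefore $v_E=v_{E_0}$, which simultaneously gives $\Gamma=c_X(E_0)=p$ and $a(E,X)=1/r$ (your separate step for $\Gamma=p$ becomes unnecessary). Finally, since $f$ is an extremal divisorial contraction, $-E$ is $f$-ample and $Y\simeq\Proj_X\bigoplus_{m\geq 0}f_*\mathcal{O}_Y(-mE)$ depends only on the valuation $v_E$; as the weighted blowup is such a contraction with the same valuation, $f$ is the $(1,a,r-a)$-weighted blowup. Your overall strategy (compare with the economic/weighted-blowup divisor, then recover $f$ from its valuation) is the right one, but without the displayed discrepancy computation, or some substitute for it, the proof is incomplete exactly where you admit it is.
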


\begin{Ex}
The statement of theorem \ref{thm:kawamata} fails in higher dimension. Let $X=\mathbb{P}(1,5,7,8,10)$ with homogeneous variables $x,\,y,\,z,\,t,\,v$. Let $\Gamma \colon (x=z=t=0) \sim \frac{1}{5}(1,2,3) \in X$. Then, the $(1,2,3)$-weighted blowup of $\Gamma$ is terminal. Moreover, the point $p = \Gamma_{|z=0}$ is a cyclic quotient singularity of type $\frac{1}{5}(1,2,3,0)$. Let $m \in \mathbb{Z}_{>0}$ and consider the family $\varphi_m$ of $(1,2,3,5m)$-weighted blowup of $p$. Then $\varphi_m$ is terminal.   
\end{Ex}

Fix $d \in \mathbb{N}$. The classification of terminal $d$-fold singularities is a wide open problem settled completely only for $d\leq 3$. If we restrict to cyclic quotient singularities the problem has been solved only very recently for $d=4$ in \cite{4foldsing}. Likewise, the classification of terminal extractions from these is also completely open, even for $d=3$. A fundamental application of this knowledge is the possibility, in principle, of constructing Sarkisov links from higher dimensional Fano varieties.  Although terminal singularities are not classified in general, the following is a criterion to check when a \textit{cyclic quotient singularity} of any dimension is terminal.

\begin{Thm} \label{thm:YPG} \cite[Theorem~4.11]{YPG}\label{thm:terminalsing}
A cyclic quotient singularity $\frac{1}{r}(a_1,\ldots,a_n)$ is terminal if and only if 
\[
\sum_{i=1}^n\overline{l a_i}>r, \quad \text{for}\,\, l = 1 \ldots, r-1
\]
where $\overline{\phantom{a_i}}$ denotes smallest residue $\bmod r$.
\end{Thm}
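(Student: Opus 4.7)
The plan is to realise $X = \mathbb{C}^n/\bm{\mu}_r$ as an affine toric variety $X_\sigma$ with cone $\sigma = \sum_{i=1}^n \mathbb{R}_{\ge 0}\, e_i$ sitting inside the overlattice
$$N \;=\; \mathbb{Z}^n + \mathbb{Z}\cdot \tfrac{1}{r}(a_1,\ldots,a_n) \;\subset\; \mathbb{Q}^n.$$
This is standard: the invariant subring $\mathbb{C}[x_1,\ldots,x_n]^{\bm{\mu}_r}$ is generated by the monomials whose exponent vectors lie in the dual cone. Once $X$ is viewed torically, terminality becomes a combinatorial condition on primitive lattice points of $\sigma \cap N$.

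The key ingredient is the toric discrepancy formula: subdividing $\sigma$ by a ray through a primitive $v \in N \cap \sigma$, and writing $v = \sum_{i=1}^n \lambda_i e_i$ in the coordinates associated with the generators $e_1,\ldots,e_n$, produces an exceptional prime divisor $E_v$ with discrepancy $a(E_v,X) = \sum_{i=1}^n \lambda_i - 1$. A toric log resolution of $X$ is obtained by refining $\sigma$ into unimodular cones, so its exceptional divisors are precisely those indexed by the primitive lattice points of $\sigma \cap N$ other than $e_1,\ldots,e_n$. Since further blowups of smooth centres on a smooth variety produce divisors of positive relative discrepancy, $X$ is terminal if and only if every one of these toric discrepancies is strictly positive.

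Next I would enumerate the primitive lattice points. Any $v \in \sigma \cap N$ has the form $v = \tfrac{1}{r}(c_1,\ldots,c_n)$ with $c_i \in \mathbb{Z}_{\ge 0}$ and $(c_1,\ldots,c_n) \equiv l(a_1,\ldots,a_n) \pmod r$ for a unique $l \in \{0,1,\ldots,r-1\}$. The coordinate-wise minimal representative in a given residue class $l$ is obtained by taking $c_i = \overline{l a_i}$, so
$$\sum_{i=1}^n \lambda_i \;=\; \frac{1}{r}\sum_{i=1}^n c_i \;\ge\; \frac{1}{r}\sum_{i=1}^n \overline{l a_i},$$
with equality attained by $v_l = \tfrac{1}{r}(\overline{l a_1},\ldots,\overline{l a_n}) \in \sigma \cap N$. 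For $l = 0$ one has $v \in \mathbb{Z}^n$, and any primitive integer vector in $\sigma$ distinct from $e_1,\ldots,e_n$ automatically satisfies $\sum c_i \ge 2$, so such divisors contribute discrepancies $\ge 1$ unconditionally. For $l \in \{1,\ldots,r-1\}$, the infimum of $\sum \lambda_i$ over the residue class is exactly $\tfrac{1}{r}\sum \overline{l a_i}$.

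Combining these observations, the condition that every toric exceptional divisor has positive discrepancy is equivalent to $\sum_{i=1}^n \overline{l a_i} > r$ for each $l \in \{1,\ldots,r-1\}$, as claimed. The main subtlety is justifying that the check reduces to toric divisors; I would handle this either by invoking the standard fact that terminality of a toric singularity is detected on a toric log resolution, or, alternatively, by appealing directly to Reid's age computation for quotient singularities, which identifies the discrepancies over $\mathbb{C}^n/\bm{\mu}_r$ with the values $\tfrac{1}{r}\sum \overline{l a_i} - 1$.
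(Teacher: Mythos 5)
Your argument is correct and is essentially the standard toric proof of the Reid--Tai criterion: realise $\mathbb{C}^n/\bm{\mu}_r$ as the cone $\sigma=\sum\mathbb{R}_{\ge 0}e_i$ in the overlattice $N=\mathbb{Z}^n+\mathbb{Z}\cdot\tfrac{1}{r}(a_1,\ldots,a_n)$, use the discrepancy formula $a(E_v,X)=\sum\lambda_i-1$ for toric valuations, and minimise the coordinate sum in each residue class. The paper gives no proof of its own---it simply cites \cite[Theorem~4.11]{YPG}---and your route coincides with the argument in that reference, so there is nothing to reconcile beyond the minor point that the rays of a chosen unimodular refinement need not exhaust all primitive points of $\sigma\cap N$ (which you already address via the reduction to toric valuations).
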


Using theorem \ref{thm:terminalsing}, one can check effectively whether a weighted blowup is terminal. The following is a consequence of \cite[Corollary~2.6]{Sankaran}.

\begin{Thm} \label{thm:wtbup}
The $(\alpha_1,\ldots,\alpha_n)$-weighted blowup of $\mathbb{C}^n$ at the origin has terminal singularities if and only if 
\[
\frac{1}{V}(\alpha_1,\ldots,\alpha_n)
\]
is terminal where $V=-1+\sum_{i=1}^n \alpha_i$.
\end{Thm}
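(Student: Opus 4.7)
The plan is to identify the singular charts of the weighted blowup $T$ explicitly as cyclic quotient singularities and then invoke Sankaran's Corollary~2.6 to consolidate the resulting terminality conditions into the single one stated.

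First, I would use the explicit description of weighted blowups recalled above. In the chart of $T$ where $x_i\neq 0$, one normalises $x_i=1$ using the $\mathbb{C}^*$-action; the residual stabiliser is $\bm{\mu}_{\alpha_i}$, and it acts on the remaining coordinates $(u,x_1,\ldots,\widehat{x}_i,\ldots,x_n)$ with weights $(-1,\alpha_1,\ldots,\widehat{\alpha}_i,\ldots,\alpha_n)$. Hence the $i$-th affine chart of $T$ is isomorphic to the cyclic quotient singularity
\[
\frac{1}{\alpha_i}(-1,\alpha_1,\ldots,\widehat{\alpha}_i,\ldots,\alpha_n).
\]

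Next, I would observe that $T$ has at worst terminal singularities if and only if each of these $n$ cyclic quotients is terminal, since all toric singularities of $T$ occur at the toric fixed points on the exceptional divisor. Applying Theorem~\ref{thm:terminalsing} to the $i$-th chart, and using $\overline{-l}=\alpha_i-l$ together with $\overline{l\alpha_i}=0$ modulo $\alpha_i$, the terminality condition simplifies to
\[
\sum_{j=1}^n \overline{l\alpha_j}>l \quad \text{for all } l=1,\ldots,\alpha_i-1,
\]
with residues taken modulo $\alpha_i$.

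Finally, the main step is to consolidate these $n$ separate conditions (one per chart) into the single condition that $\tfrac{1}{V}(\alpha_1,\ldots,\alpha_n)$ is terminal, where $V=-1+\sum\alpha_i$. By Theorem~\ref{thm:terminalsing}, the latter reads $\sum_j \overline{k\alpha_j}>V$ for all $k=1,\ldots,V-1$, this time with residues taken modulo $V$. This equivalence is precisely the content of Sankaran's Corollary~2.6, which I would invoke directly rather than reprove. The main obstacle, were one to establish it from scratch, lies in comparing residues modulo the several $\alpha_i$ with residues modulo $V$; Sankaran sidesteps this through a uniform reformulation in terms of lattice points of the subdivided fan, which organises all the Reid inequalities into a single one governed by the discrepancy $V$.
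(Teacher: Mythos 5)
Your proposal is correct and amounts to essentially the same approach as the paper: the paper gives no independent argument, stating the theorem as a direct consequence of Sankaran's Corollary~2.6, and your added scaffolding --- the chart identification $(x_i\neq 0)\simeq\frac{1}{\alpha_i}(-1,\alpha_1,\ldots,\widehat{\alpha_i},\ldots,\alpha_n)$ and the Reid--Tai reduction --- reproduces the computation already carried out in the Preliminaries. The decisive step, consolidating the $n$ chart-wise inequalities into the single condition modulo $V=\sum_{i}\alpha_i-1$, is in both your argument and the paper's delegated to that same citation.
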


\paragraph{The weighted blowup of  $\mathbb{P}^d$ at a point.}

Let $\varphi \colon T \rightarrow \mathbb{P}^d$ be the toric $(\alpha_1,\ldots,\alpha_d)$-weighted blowup of $\mathbb{P}^d$ at the coordinate point $\mathbf{p}$ which we fix to be $(1\colon 0 \colon \ldots \colon 0)$ without loss of generality. Since we are blowing up a toric variety along a torus invariant ideal it follows that $T$ toric of rank 2 and $\Cl(T)= \mathbb{Z}[H]+\mathbb{Z}[E]$, where $H=\varphi^*\mathcal{O}_{\mathbb{P}^d}(1)$  and $E = \Phi^{-1}(\mathbf{p})\simeq \mathbb{P}(\alpha_1,\ldots,\alpha_d)$ is the exceptional divisor.\ Notice that $\mathbf{p}$ is not in the support of $\mathcal{O}_{\mathbb{P}^d}(1)$.\ $T$ is $\mathbb{Q}$-factorial, since any Weil prime divisor on $T$ is a linear combination of $\mathbb{Q}$-Cartier divisors. By \cite[Theorem~2.2]{toricproj}, $T$ is projective since it is complete. The Cox ring of $T$ is
\begin{equation*}
\Cox(T) = \bigoplus_{m,n \in \mathbb{Z}_{\geq 0}} H^0(T,mH-nE) \; .
\end{equation*}
Since $T$ is toric, the Cox ring of $T$ is isomorphic to the (bi)-graded polynomial ring $\mathbb{C}[u,x_0,\ldots,x_d]$.  If 
\[
(u,x_0, \ldots, x_d) \mapsto (x_0 \colon u^{\alpha_1}x_1 \colon \ldots \colon u^{\alpha_d}x_d)
\]
is the weighted blowup $\varphi \colon T \rightarrow \mathbb{P}^d$ at $\mathbf{p}$, then we say that $x_i$ has bidegree $(1, \alpha_i)$. The toric variety $T$ is then represented by the matrix
\begin{equation*}
\begin{array}{cccc|ccccc}
             &       & u     & x_0 & x_1 & \ldots & x_d & \\
\actL{T}   &  \lBr &  0 & 1 & 1 & \ldots & 1 &   \actR{}\\
             &       & -1 & 0 & \alpha_1 & \ldots & \alpha_d &  
\end{array}
\end{equation*}
where the vertical bar represents the irrelevant ideal which in this case is $(u,x_0) \cap (x_1,\ldots,x_d)$ and each row is a representation of a $\mathbb{C}^*$-action on $\mathbb{C}^{d+2}$. In other words, the variety $T$ is defined by the geometric quotient 
\[
T=\frac{\mathbb{C}^{d+2}\setminus \mathcal{Z}((u,x_0)\cap(x_1,\ldots,x_d))}{\mathbb{C}^*\times \mathbb{C}^*}.
\]
See \cite[Theorem~2.1]{Coxhom}. The variety $T$ is built up from $2 \times d$ affine patches. These are of the form
\[
(ux_i\not = 0) \quad \text{and} \quad (x_0x_i\not =0)
\]
where $1\leq i\leq d$. We have,
\begin{align*}
    (ux_i\not =0) &\simeq \Spec \mathbb{C}\Big[u,x_0,\ldots,x_d,\frac{1}{u},\frac{1}{x_i}\Big]^{\mathbb{C}^* \times\mathbb{C}^*} \\
    &\simeq \Spec \mathbb{C}\Big[\frac{x_0}{x_iu^{\alpha_i}},\frac{x_1}{x_i}u^{\alpha_1-\alpha_i},\ldots,\frac{x_d}{x_i}u^{\alpha_d-\alpha_i}\Big]\\
      &\simeq \mathbb{A}^d
\end{align*}
On the other hand,
\begin{align*}
    (x_0x_i\not =0) &\simeq \Spec \mathbb{C}\Big[u,x_0,\ldots,x_d,\frac{1}{x_0},\frac{1}{x_i}\Big]^{\mathbb{C}^* \times\mathbb{C}^*} \\
      &\simeq \frac{1}{\alpha_i}(-1,\alpha_1,\ldots, \widehat{\alpha_i},\ldots,\alpha_d)
\end{align*}

\begin{Ex}
Consider the 3-fold given by
\begin{equation*}
\begin{array}{cccc|ccccc}
             &       & u     & x_0 & x_1 & x_2 & x_3 & \\
\actL{T}   &  \lBr &  0 & 1 & 1 & 1 & 1 &   \actR{.}\\
             &       & -1 & 0 & 1 & 1 & 3 &  
\end{array}
\end{equation*}
Then, the affine patch $(x_0x_3\not = 0)$ is isomorphic to 
\begin{align*}
    \Spec \mathbb{C}\Big[u,x_0,\ldots,x_d,\frac{1}{x_0},\frac{1}{x_i}\Big]^{\mathbb{C}^* \times\mathbb{C}^*} & = \Spec \mathbb{C}\Big[\frac{ux_1}{x_0},\frac{ux_2}{x_0},\frac{u^3x_3}{x_0},\frac{x_1^3}{x_3x_0^2},\frac{x_1^2x_2}{x_3x_0^2},\frac{x_1x_2^2}{x_3x_0^2},\frac{x_2^3}{x_3x_0^2}\Big]\\
    &=\Spec\mathbb{C}[xz,yz,z^3,x^3,x^2y,xy^2,y^3]\\
    &=\Spec\mathbb{C}[x,y,z]^{\bm{\mu_3}}
\end{align*}
where the action of  $\bm{\mu_3}$ on $\mathbb{C}^3$ is given by $(x,y,z) \mapsto (\mu x, \mu y, \mu^2 z)$. That is, $(x_0x_3\not = 0) \simeq \frac{1}{3}(1,1,2)$.
\end{Ex}

Without loss of generality we can assume $\alpha_i \leq \alpha_{i+1}$. Let $N^1(T)$ be the finite dimensional vector space of $\mathbb{Q}$-divisors modulo numerical equivalence.  The effective cone of $T$, $\Eff(T) \subset N^1(T)$, is generated by the rays $\mathbb{R}_+[E]+\mathbb{R}_+[H-\alpha_d E]$ and it has a decomposition into smaller chambers. By \cite[Proposition~1.11]{mdsGIT}, there is a finite number of birational maps $f_i \colon T \dashrightarrow Y_i$, with $Y_i$  Mori dream spaces, such that 
\[
\Eff(T)=\bigcup_{i}\mathcal{C}_i,
\]
where
\begin{align*}
\mathcal{C}_i&=\bigcup_{i}f_i^*(\Nef(Y_i))+\mathbb{R}_+[E] && \text{if}\,\, \alpha_{d-1}=\alpha_d\\
\mathcal{C}_i&=\bigcup_{i}f_i^*(\Nef(Y_i))+\mathbb{R}_+[E]+\mathbb{R}_+[H-\alpha_dE] && \text{if}\,\, \alpha_{d-1}<\alpha_d.
\end{align*}

Recall that the cone of Movable divisors of $T$, $\Mov(T) \subset \Eff(T)$, is the cone of divisors $D$ for which the base locus of $|D|$ has codimension at least 2 in $T$. There is a finite number of small birational maps $f_j \colon T \dashrightarrow Y_j$ for which $\Mov(T)=\bigcup_{j}f_j^*(\Nef(Y_j))$. We have
\[
\Nef(T)=\mathbb{R}_+[H]+\mathbb{R}_+[H-\alpha_1 E] \subset \mathbb{R}_+[H]+\mathbb{R}_+[H-\alpha_{d-1}E]=\Mov(T).
\]

\begin{Thm} \label{thm:weak}
Let $\varphi \colon T \rightarrow \mathbb{P}^d$ be the toric $(\alpha_1,\ldots,\alpha_d)$-weighted blowup of a point where $d\geq 2$. Suppose that $T$ is a normal projective variety. Then $-K_T$ is big. Moreover,  $T$ is weak Fano if and only if 
\[
\frac{\sum_{i=1}^d\alpha_i-1}{d+1}\leq \min_{1 \leq i\leq d}{\alpha_i}
\]
with equality if and only if $-K_T$ is nef but not ample.
\end{Thm}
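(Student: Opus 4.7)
The plan is to compute $-K_T$ explicitly in the basis $\{H, E\}$ of $N^1(T)_\mathbb{Q}$ and then check when it lies in the interior of $\Eff(T)$ (for bigness) and when it lies in $\Nef(T)$ (for weak Fano-ness), using the descriptions of these cones recalled just before the statement.

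First I would compute the canonical class. The standard toric discrepancy formula for a weighted blowup with weights $(\alpha_1,\ldots,\alpha_d)$ gives
\[
K_T = \varphi^* K_{\mathbb{P}^d} + V\,E, \qquad V := \sum_{i=1}^d \alpha_i - 1,
\]
and since $K_{\mathbb{P}^d} = -(d+1)\mathcal{O}_{\mathbb{P}^d}(1)$, this yields $-K_T = (d+1)H - V E$.

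Next, bigness. Recall $\Eff(T) = \mathbb{R}_+[E] + \mathbb{R}_+[H - \alpha_d E]$. Writing
\[
-K_T = sE + t(H - \alpha_d E)
\]
gives $t = d+1$ and $s = (d+1)\alpha_d - V$. The inequality $V = \sum_i \alpha_i - 1 \leq d\alpha_d - 1 < (d+1)\alpha_d$ (using $\alpha_i \le \alpha_d$ and $\alpha_d \ge 1$) shows $s, t > 0$, so $-K_T$ lies in the interior of $\Eff(T)$ and is therefore big.

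For the weak Fano criterion, use $\Nef(T) = \mathbb{R}_+[H] + \mathbb{R}_+[H - \alpha_1 E]$, where $\alpha_1 = \min_i \alpha_i$ by our convention $\alpha_i \le \alpha_{i+1}$. Writing
\[
-K_T = pH + q(H - \alpha_1 E)
\]
forces $q\alpha_1 = V$ and $p + q = d+1$, so $q = V/\alpha_1 \geq 0$ automatically and $p = d+1 - V/\alpha_1$. Hence $-K_T \in \Nef(T)$ if and only if $V/\alpha_1 \le d+1$, i.e.\
\[
\frac{\sum_{i=1}^d \alpha_i - 1}{d+1} \leq \min_{1\leq i\leq d}\alpha_i,
\]
with equality corresponding exactly to $p=0$, i.e.\ $-K_T$ on the boundary ray $\mathbb{R}_+[H-\alpha_1 E]$ of the nef cone, hence nef but not ample.

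The only non-routine ingredient is the discrepancy formula $V = \sum\alpha_i - 1$ for a toric weighted blowup and the explicit expression $\varphi^*\mathcal{O}_{\mathbb{P}^d}(1) = H$; both are standard for toric blowups but should be cited or derived from the Cox-ring description given earlier. Everything else is the linear-algebra exercise of intersecting a line with the two-dimensional cones $\Eff(T)$ and $\Nef(T)$.
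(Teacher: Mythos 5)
Your proposal is correct and follows essentially the same route as the paper: compute $-K_T \sim (d+1)H - (\sum_i \alpha_i - 1)E$ and expand it in the generators of $\Nef(T)=\mathbb{R}_+[H]+\mathbb{R}_+[H-\alpha_1 E]$, with the boundary case giving nef but not ample. Your bigness step (placing $-K_T$ in the interior of $\Eff(T)=\mathbb{R}_+[E]+\mathbb{R}_+[H-\alpha_d E]$ via $s=(d+1)\alpha_d - V>0$) is in fact a slightly more careful justification than the paper's remark that $-K_T$ is a sum of effective divisors.
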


\begin{proof}
The variety $T$ is toric since it is the blowup of a toric variety along a torus invariant ideal.
Since $-K_T$ is the sum of the torus invariant divisors, \cite[Theorem~8.2.3]{Cox}, it follows that
\begin{equation*}
-K_T \sim (d+1)H-\Big(\sum_{i=1}^d\alpha_i -1\Big)E
\end{equation*}
and that $-K_T$ is big, since it is a sum of effective divisors. Let $\alpha= \min_{1 \leq i\leq d} {\alpha_i}$. Then the Nef cone of $T$ is $\mathbb{R}_+[H]+\mathbb{R}_+[H-\alpha E]$. We can write $-\alpha K_T$ in terms of the generators of $\Nef(T)$ as 
\[
-\alpha K_T \sim \Big((d+1)\alpha+1-\sum_{i=1}^d\alpha_i\Big)H+\Big(\sum_{i=1}^d\alpha_i -1\Big)(H-\alpha E).
\]
Since $\sum_{i=1}^d\alpha_i -1 >0$ it follows that $-K_T$ is nef if and only if 
\[
(d+1)\alpha+1-\sum_{i=1}^d\alpha_i \geq 0
\]
as we wanted to show.
\end{proof}

\begin{Cor}
Let $\varphi \colon T \rightarrow \mathbb{P}^3$ be the $(1,a,b)$-Kawakita blowup of a point. Then  
\begin{itemize}
    \item $T$ is Fano if and only if $(a,b) \in \{(1,1), (1,2) \}$;
    \item $T$ is a weak Fano but not Fano if and only if $(a,b) \in \{(1,3) \}$.
\end{itemize}
\end{Cor}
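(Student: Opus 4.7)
The plan is to reduce the problem to a direct arithmetic check using Theorem \ref{thm:weak}. By Theorem \ref{thm:kawakita}, a $(1,a,b)$-Kawakita blowup has $\gcd(a,b)=1$, and after permuting the weights we may assume $1\leq a\leq b$, so that $\min_i\alpha_i=1$. The variety $T$ is automatically a normal projective toric variety (as noted in the excerpt), so Theorem \ref{thm:weak} applies with $d=3$.

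Plugging $\alpha_1=1$, $\alpha_2=a$, $\alpha_3=b$ into the weak Fano criterion, the inequality becomes
\[
\frac{a+b}{4}\leq 1,
\]
i.e.\ $a+b\leq 4$, with equality characterising the weak Fano but not Fano case. So $T$ is Fano iff $a+b<4$ and weak Fano but not Fano iff $a+b=4$.

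It then remains to enumerate the pairs $(a,b)$ with $1\leq a\leq b$ and $\gcd(a,b)=1$ satisfying each inequality. For $a+b<4$ the only options are $(1,1)$ and $(1,2)$; for $a+b=4$ the pair $(2,2)$ is excluded by coprimality, leaving only $(1,3)$. This yields exactly the lists in the statement.

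There is no real obstacle: the only thing to watch is the coprimality hypothesis built into the notion of a Kawakita blowup, which is what eliminates $(2,2)$ from the boundary case. Otherwise this is a one-line specialisation of Theorem \ref{thm:weak} followed by a small enumeration.
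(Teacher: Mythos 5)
Your proof is correct and follows essentially the same route as the paper: specialise Theorem \ref{thm:weak} to weights $(1,a,b)$ to get the condition $a+b\leq 4$ (with equality giving nef but not ample), then use the coprimality of $a$ and $b$ from the Kawakita hypothesis to enumerate the admissible pairs. No gaps; your explicit remark that coprimality excludes $(2,2)$ just makes the paper's terse enumeration precise.
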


\begin{proof}
By theorem \ref{thm:weak}, we only have to check the inequality $4-a-b\geq 0$. Since $\gcd(a,b)=1$ we have $(a,b) \in \{(1,1),(1,2),(1,3) \}$ with equality only in the case $(a,b)=(1,3)$.
\end{proof}

\begin{Thm} \label{thm:ineq}
Let $\varphi \colon T \rightarrow \mathbb{P}^d$ be the toric    $(\alpha_1,\ldots,\alpha_d)$-weighted blowup of a point where $d\geq 2$. Suppose that $T$ is a normal weak Fano variety. Then,
\[
\frac{\sum_{i=1}^d \alpha_i-1}{d+1} <\sqrt[d]{\Pi_{i=1}^d \alpha_i}  \leq \frac{\sum_{i=1}^d \alpha_i}{d} 
\]
Moreover, the second inequality is strict unless $\varphi$ is the ordinary blowup of a point.
\end{Thm}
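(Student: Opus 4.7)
}

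The right-hand inequality is a direct application of the classical AM--GM inequality to the positive integers $\alpha_1,\ldots,\alpha_d$, giving $\sqrt[d]{\prod\alpha_i}\le (\sum\alpha_i)/d$ with equality iff $\alpha_1=\cdots=\alpha_d$. Under the standing convention that the weights are coprime (otherwise the blowup is non-reduced in the natural grading), this common value must be $1$, so equality occurs exactly when $\varphi$ is the ordinary blowup of the point. This step is routine; the only thing to flag is the reduction from ``all weights equal'' to ``all weights equal to $1$'', which uses the coprimality convention.

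For the left-hand inequality, the plan is to combine the weak Fano criterion of Theorem \ref{thm:weak} with a trivial lower bound on the geometric mean. Writing $\sigma=\sum_{i=1}^d\alpha_i$, $\pi=\prod_{i=1}^d\alpha_i$, and $\mu=\min_i\alpha_i$, Theorem \ref{thm:weak} gives
\[
\mu \;\ge\; \frac{\sigma-1}{d+1},
\]
while $\alpha_i\ge\mu$ for every $i$ immediately yields $\sqrt[d]{\pi}\ge\mu$. Chaining these two bounds gives $\sqrt[d]{\pi}\ge(\sigma-1)/(d+1)$, so the only work is to rule out equality.

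The key step is the equality analysis. Suppose $\sqrt[d]{\pi}=(\sigma-1)/(d+1)$. Then both inequalities in the chain must be equalities: $\sqrt[d]{\pi}=\mu$ forces $\alpha_1=\cdots=\alpha_d=\mu$, and then $\mu=(\sigma-1)/(d+1)=(d\mu-1)/(d+1)$, which rearranges to $\mu=-1$, contradicting $\mu\ge 1$. Hence the inequality is strict whenever $T$ is weak Fano. This rules-out-the-equality argument is the only non-formal step, and it is short; I do not expect a serious obstacle. The cleanest write-up is: state the two ingredients ($\sqrt[d]{\pi}\ge\mu$ and the weak Fano bound on $\mu$) in one short paragraph, then devote one display to the contradiction $\mu=-1$ arising from hypothetical equality, and close by invoking AM--GM for the upper bound with the coprimality remark for the equality case.
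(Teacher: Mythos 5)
Your proposal is correct in substance, but it proves the left-hand inequality by a genuinely different route than the paper. The paper never uses the numerical criterion of Theorem \ref{thm:weak} here: it computes the anticanonical degree
$(-K_T)^d=(d+1)^d-\bigl(\sum_{i=1}^d\alpha_i-1\bigr)^d\big/\prod_{i=1}^d\alpha_i$
(using that $E^k\cdot H^{d-k}=0$ for $0<k<d$, since the blown-up point is not in the support of $\mathcal{O}_{\mathbb{P}^d}(1)$) and invokes $(-K_T)^d>0$ for a weak Fano (nef and big), which gives the strict inequality in one stroke with no equality analysis. You instead chain the nefness criterion $\min_i\alpha_i\ge(\sum_i\alpha_i-1)/(d+1)$ from Theorem \ref{thm:weak} with the trivial bound $\sqrt[d]{\prod_i\alpha_i}\ge\min_i\alpha_i$, and then kill the equality case via the $\mu=-1$ contradiction; this is valid and more elementary (no intersection theory), at the cost of the extra equality-case step that the paper's degree computation makes unnecessary. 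One caveat on your AM--GM equality case: the paper has no standing convention that the weights are coprime (coprimality is only ever \emph{deduced} from terminality, which is not a hypothesis of this theorem), so you cannot pass from ``all $\alpha_i$ equal'' to ``all $\alpha_i=1$'' by convention. The paper instead argues from the stated hypotheses: if $\alpha_1=\cdots=\alpha_d=\alpha>1$ then $T$ would be singular along the exceptional divisor, which is impossible for a normal variety (regular in codimension one). You should replace the appeal to an unstated convention by an argument of this kind, so that the equality statement is derived from the normality assumption actually present in the theorem.
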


\begin{proof}
Since $T$ is a weak Fano it follows from \cite[Theorem~2.2.16]{posI} that $(-K_T)^d>0$. We have, 
\begin{align*}
(-K_T)^d &= \bigg((d+1)H-\Big(\sum_{i=1}^d\alpha_i -1\Big)E\bigg)^d \\
 &=(d+1)^d-\frac{\Big(\sum_{i=1}^d\alpha_i -1\Big)^d}{\Pi_{i=1}^d \alpha_i}.
\end{align*}
Notice that since the point we blowup is not in the support of $\mathcal{O}_{\mathbb{P}^d}(1)$, we have $E^k \cdot H^{d-k}=0$ for any $0<k<d$. Hence,
\[
\frac{\sum_{i=1}^d \alpha_i-1}{d+1} <\sqrt[d]{\Pi_{i=1}^d \alpha_i}. 
\]
The other inequality is the AM-GM inequality and it is known that equality holds if and only if $\alpha:=\alpha_1=\cdots =\alpha_d$. In that case suppose that $\alpha>0$. Then, $T$ is singular along the exceptional divisor which is impossible since $T$ is normal.
\end{proof}

\section{Sarkisov Links}

Since $T$ is a Mori dream space, a divisorial extraction $\varphi \colon T \rightarrow \mathbb{P}^d$ initiates a Sarkisov link if and only if the class of the anticanonical divisor of $T$ is in the interior of the movable cone of $T$ and any small $\mathbb{Q}$-factorial modification $T\dashrightarrow T'$ is terminal. See for instance \cite[Lemma~2.9]{hamidquartic}.
In that case, the construction of an elementary Sarkisov link for $T$ is naturally divided into two main cases, depending roughly on its Mori chamber decomposition. 
Namely the behaviour of its movable cone of divisors near the boundary of the effective cone of divisors of $T$.\
\begin{enumerate}
    \item \label{item: fibr} \textbf{Fibration}: the class of $H-\alpha_{d-1}E$ is not big. In this case $\alpha_{d-1}=\alpha_d$.
    \item \label{item: div contr} \textbf{Divisorial Contraction}: the class of $H-\alpha_{d-1}E$ is big. In this case $\alpha_{d-1}<\alpha_{d}$ and therefore $H-\alpha_{d-1}E \not \sim_{\mathbb{Q}} H-\alpha_{d}E$.\ Moreover, we contract the effective divisor $H-\alpha_{d}E$ to an $r$-dimensional variety where $r=|\{\alpha_i\,\, |\,\, \alpha_i = \alpha_{d-1}\}|-1$.
\end{enumerate}

 A Sarkisov link initiated by $\varphi$ is of the form

\begin{equation*}
\begin{tikzcd} 
& T \arrow[swap]{dl}{\varphi} \arrow[swap]{dr}{\alpha_0} \arrow[dashed]{rr}{\tau_0} & &  T_1\arrow[swap]{dr}{\alpha_1} \arrow{dl}{\beta_0} \arrow[dashed]{r}{\tau_1} & \cdots \arrow[dashed]{r}{\tau_n}   & T'\arrow{dr}{\varphi'}\arrow{dl}{\beta_{n-1}}  & \\
 \mathbb{P}^d &  & \mathcal{F}_0 &  & \cdots  &  & \mathcal{F}'  
\end{tikzcd}
\end{equation*}
and $\dim \mathcal{F}' \leq \dim T'$, with equality if and only if we are in the second case.\  Moreover, since $\varphi$ is a projective birational toric morphism, the diagram above is a toric Sarkisov link. See for instance \cite[Theorem~0.2]{ReidToricMMP} or \cite[Theorem~4.1]{2raybrown}.

\begin{Ex} We give an example of how a small modification arises from variation of GIT from a $\mathbb{C}^*$ action. As mentioned in \cite{reidflip}, Mori flips arise naturally in this context.

For a Mori Dream Space, the GIT chambers and Mori chambers coincide, see \cite[Theorem~2.3]{mdsGIT}. Hence, the 2-ray game on a rank 2 toric variety $T$ can be obtained by variation of GIT as explained in \cite[Section~4]{2raybrown}.  In this case, let $T$ be the $\mathbb{P}^1$-bundle $T= \Proj_{\mathbb{P}^1} \mathcal{E}$, where 
\[
\mathcal{E}=\mathcal{O}_{\mathbb{P}^1}\oplus\mathcal{O}_{\mathbb{P}^1}(1)\oplus\mathcal{O}_{\mathbb{P}^1}(1).
\]
Then, $T$ is  
\[
\begin{array}{cccc|cccc}
             &       & y_1  & y_2 &   t & x_1 & x_2  & \\
\actL{T}   &  \lBr &  0 & 0 &   1 & 1 & 1  &   \actR{}\\
             &       & 1 & 1 &   0 & -1 & -1 &  
\end{array}
\] 
with Cox ring  $\mathbb{C}[y_1,y_2,t,x_1,x_2]$
and irrelevant ideal  $(y_1,y_2) \cap (t,x_1,x_2)$ as shown by the vertical bar. That is, we have an action of $(\mathbb{C}^*)^2$ on $\mathbb{C}^5$ given by
\[
(\lambda,\mu) \cdot (y_1,y_2,t,x_1,x_2) = (\mu y_1,\mu y_2, \lambda t, \lambda\mu^{-1} x_1,\lambda \mu^{-1}x_2) 
\]
The GIT chambers of $T$ are 

\begin{figure}[h]%
\centering
\begin{tikzpicture}[scale=3,font=\small]
  \coordinate (A) at (0, 0);
  \coordinate [label={left:$(0,1)$}] (E) at (0,0.5);
  \coordinate [label={above:$(1,0)$}] (K) at (0.5,0);
	\coordinate [label={right:$(1,-1)$}] (5) at (0.5,-0.5);
	\draw  (A) -- (E);
  \draw  (A) -- (K);
	\draw (A) -- (5);
	\end{tikzpicture}
\caption{A representation of the GIT chamber decomposition of $T$.}%
\label{fig:git}%
\end{figure}
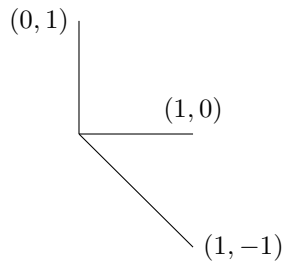
Choosing the character $(0,1)$ constructs a variety isomorphic to $\mathbb{P}^1$. Notice we are implicitly using the fact that $\mathbb{Z}\oplus \mathbb{Z} $ and $\Hom_{\mathbb{Z}}((\mathbb{C}^*)^2,\mathbb{C}^*)$ are isomorphic abelian groups.

This is given by 
\[
\Proj \bigoplus_{m\geq 0} H^0(T,\mathcal{O}(0,m)) = \Proj \mathbb{C}[y_1,y_2] \simeq \mathbb{P}^1
\]
and similarly with the ray generated by $(1,-1)$. The ray $(1,0)$ constructs the non-$\mathbb{Q}$-factorial variety
\[
\Proj \bigoplus_{m\geq 0} H^0(T,\mathcal{O}(m,0)) = \Proj \mathbb{C}[t,y_1x_1,y_1x_2,y_2x_1,y_2x_2] \simeq (u_{11}u_{22}-u_{12}u_{21}=0) \subset \mathbb{P}^4
\]
where $u_{ij} := y_ix_j$. This is a quadric cone $Q$ whose only singular point is $p=(1:0:0:0:0)$. On the other hand, choosing characters in the interior of the two GIT chambers constructs varieties related by a small $\mathbb{Q}$-factorial modification. Choosing a character in the interior of the cone generated by $\left(\begin{smallmatrix} 0 \\ 1 \end{smallmatrix} \right)$ and $\left(\begin{smallmatrix} 1 \\ 0 \end{smallmatrix} \right)$ constructs a variety isomorphic to $T$ since these generate the Mori cone of $T$. On the other hand, choosing a character in the interior of the cone generated by $\left(\begin{smallmatrix} 1 \\ 0 \end{smallmatrix} \right)$ and $\left(\begin{smallmatrix} 1 \\ -1 \end{smallmatrix} \right)$ constructs a variety isomorphic to $T'$, where $T'$ has the same Cox ring of $T$ but its irrelevant ideal is $(y_1,y_2,t) \cap (x_1,x_2)$. Consider the map $f \colon T \rightarrow Q$ given by
\[
(y_1,y_2,t,x_1,x_2) \mapsto (t,y_1x_1,y_1x_2,y_2x_1,y_2x_2).
\] 
and similarly $g\colon T' \rightarrow Q$. Then $f$ contracts the locus $C^{-} \colon \mathbb{P}^1 \simeq (x_1=x_2=0) \subset T$ to $p \in Q$ and $g$ contracts $C^{+} \colon \mathbb{P}^1 \simeq (y_1=y_2=0) \subset T'$. Notice that $f$ and $g$ are isomorphisms away from the contracted loci. Hence, the map $\tau \colon T \rat T'$ replacing $C^-$ by $C^+$ is a small $\mathbb{Q}$-factorial modification that we denote by $(-1,-1,1,1)$. In fact, it is the Atyiah flop since $K_T \cdot C^- = K_{T'} \cdot C^+ = 0$. We say that $\tau$ is terminal since $T'$ is terminal (smooth, in fact). Moreover, the map 
\[
T' \rightarrow \mathbb{P}^1=\Proj \bigoplus_{m\geq 0}H^0(T',\mathcal{O}(m,-m)), \quad (y_1,y_2,t,x_1,x_2) \mapsto (x_1,x_2)
\]
is a fibration whose fibres are isomorphic to $\mathbb{P}^2$.
\end{Ex}

\begin{Lem} \label{lem:easycase}
Let $\varphi \colon T \rightarrow \mathbb{P}^d$ be the toric  $(1,1,\ldots,1,b)$-weighted blowup of $\mathbf{p}$. Then $-K_T$ is nef if and only if $b \in \{1,2,3\}$. The morpshism $\varphi$ initiates a toric Sarkisov link if and only if $b\in \{1,2\}$. Moreover, the hyperplanes passing through $\mathbf{p}$ induce a conic bundle structure on $T$ if $b=1$ or a divisorial contraction to $\mathbb{P}^{d-2} \subset \mathbb{P}^d(1,\ldots,1,2)$ if $b=2$.
\end{Lem}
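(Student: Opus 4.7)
The plan is to combine Theorem~\ref{thm:weak} with the Sarkisov-initiation criterion recalled at the start of Section~3. Substituting $\alpha_1=\cdots=\alpha_{d-1}=1$ and $\alpha_d=b$, the inequality $\tfrac{\sum\alpha_i-1}{d+1}\le \min_i\alpha_i$ of Theorem~\ref{thm:weak} becomes $\tfrac{d+b-2}{d+1}\le 1$, i.e.\ $b\le 3$, giving the first assertion. Since $\alpha_1=\alpha_{d-1}=1$, the cone descriptions of Section~2 collapse to $\Nef(T)=\Mov(T)=\mathbb{R}_+[H]+\mathbb{R}_+[H-E]$, so there is no small $\mathbb{Q}$-factorial modification of $T$ and the Sarkisov condition reduces to $-K_T$ lying in the interior of $\Nef(T)$. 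Writing
\[
-K_T\sim (d+1)H-(d+b-2)E=(3-b)H+(d+b-2)(H-E),
\]
interior-ness is equivalent to $3-b>0$ (the other positivity being automatic for $d\ge 2$), so $\varphi$ initiates a toric Sarkisov link if and only if $b\in\{1,2\}$.

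For $b=1$, I would identify the contraction associated to the second ray $H-E$ from the bidegree-$(1,1)$ monomials in the Cox ring, which are exactly $x_1,\ldots,x_d$. The induced morphism
\[
T\longrightarrow \Proj\mathbb{C}[x_1,\ldots,x_d]=\mathbb{P}^{d-1},\qquad (u,x_0,x_1,\ldots,x_d)\mapsto (x_1:\cdots:x_d)
\]
has $\mathbb{P}^1$-fibres: once $(x_1:\cdots:x_d)$ is fixed up to scalar, the residual $\mathbb{C}^*$-freedom in $(\mathbb{C}^*)^2$ acts on $(u,x_0)$ with weights $(1,1)$, whose GIT quotient is $\mathbb{P}^1$. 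This realises the advertised conic bundle structure on $T$.

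For $b=2$, the bidegree-$(1,1)$ monomials are $x_1,\ldots,x_{d-1}$ and $ux_d$, whose common zero set is a single point on the exceptional divisor; the additional bidegree-$(2,2)$ monomial $s:=x_0x_d$ is nonzero there, so $|2(H-E)|$ is already base-point free. An enumeration of bidegree-$(m,m)$ monomials in $\mathbb{C}[u,x_0,x_1,\ldots,x_d]$ shows that each factors uniquely as $s^\alpha (ux_d)^\beta\prod_{i=1}^{d-1}x_i^{\gamma_i}$ with $2\alpha+\beta+\sum\gamma_i=m$, whence
\[
\bigoplus_{m\ge 0}H^0\!\big(T,m(H-E)\big)\cong\mathbb{C}[x_1,\ldots,x_{d-1},ux_d,s]
\]
as graded polynomial algebras with weights $(1,\ldots,1,1,2)$. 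Taking Proj yields $\mathbb{P}^d(1,\ldots,1,2)$; restricting to the prime divisor $\{x_d=0\}$ of class $H-2E$ makes both $ux_d$ and $s$ vanish, contracting this divisor to the coordinate linear subspace $\{(y_1:\cdots:y_{d-1}:0:0)\}\simeq\mathbb{P}^{d-2}$. This is the claimed divisorial contraction to $\mathbb{P}^{d-2}\subset\mathbb{P}^d(1,\ldots,1,2)$.

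The main technical step is the $b=2$ case: one must verify that $s=x_0x_d$ is the unique additional generator needed beyond the degree-one sections and that there are no relations among $x_1,\ldots,x_{d-1},ux_d,s$, so that its Proj is indeed the stated weighted projective space. Everything else follows formally from Theorem~\ref{thm:weak}, the cone descriptions of Section~2, and the Sarkisov-initiation criterion from Section~3.
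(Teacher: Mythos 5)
Your proposal is correct and follows essentially the same route as the paper: nefness via Theorem~\ref{thm:weak}, the observation that $\Nef(T)=\Mov(T)$ so the Sarkisov criterion reduces to $-K_T$ lying in the interior of the nef cone (giving $b\in\{1,2\}$), the fibration $(x_1:\cdots:x_d)$ to $\mathbb{P}^{d-1}$ for $b=1$, and for $b=2$ the anticanonical-model-type map to $\mathbb{P}^d(1,\ldots,1,2)$ contracting the divisor of class $H-2E$ to $\mathbb{P}^{d-2}$. Your explicit verification that the section ring of $H-E$ is the free polynomial algebra on $x_1,\ldots,x_{d-1},ux_d,x_0x_d$ just spells out what the paper asserts when it writes the map $(u,x_0,\ldots,x_d)\mapsto(ux_d:x_0x_d:x_1:\cdots:x_{d-1})$.
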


\begin{proof}
The first assertion follows immediately from theorem \ref{thm:weak}. Equality happens only in the case $b=3$.  Let $T$ be the ordinary blowup of $\mathbb{P}^{d}$ at a coordinate point. Then,
\[
\Nef(T) = \Mov(T)=\mathbb{R}_+[H]+\mathbb{R}_+[H-E] \subset  \mathbb{R}_+[E]+\mathbb{R}_+[H-E] = \Eff(T)
\]
and $H-E$ is not big. The variety $T$ has a conic bundle structure induced by the hyperplanes passing through $p$. Indeed, it is given by the fibration 
\begin{align} \label{eq:fib}
\begin{split}
        \varphi' \colon T &\longrightarrow \Proj \bigoplus_{m\geq 0}H^0(T,m(H-E))\simeq \mathbb{P}^{d-1}\\
    (u,x_0,\ldots,x_{d}) & \longmapsto (x_1\colon\ldots\colon x_{d}).
    \end{split}
\end{align}
On the other hand $H-E$ is effective but not big provided that $b>1$. Indeed,
\[
\Nef(T) = \Mov(T)=\mathbb{R}_+[H]+\mathbb{R}_+[H-E] \subset  \mathbb{R}_+[E]+\mathbb{R}_+[H-bE] = \Eff(T)
\]
By theorem \ref{thm:weak}, $-K_T$ is in the interior of $\Nef(T)$ if and only if $b<3$. Hence $b=2$. Consider the map given by the linear system of multiples of $H-E$:
\begin{align*}
    \varphi' \colon T &\longrightarrow \Proj \bigoplus_{m\geq 0}H^0(T,m(H-E))\simeq \mathbb{P}^{d}(1,2,1,\ldots,1)\\
    (u,x_0,\ldots,x_{d}) & \longmapsto (ux_d : x_0x_d: x_1: \ldots : x_{d-1}).
\end{align*}
Then, $\varphi'$ contracts the divisor $\Bl_p \mathbb{P}^{d-1} \subset T$ to $\mathbb{P}^{d-2}$ and the restriction of $\varphi'$ to $\Bl_p \mathbb{P}^{d-1}$ is exactly the fibration \eqref{eq:fib} restricted to  $x_d=0$.
\end{proof}

\subsection{Sarkisov links from $\mathbb{P}^3$}

\begin{Thm} \label{thm:P3}
Let $\varphi \colon T \rightarrow \mathbb{P}^3$ be the toric $(1,a,b)$-weighted blowup of a point. Then $\varphi$ initiates a toric Sarkisov link from $\mathbb{P}^3$ if and only if 
\[
(a,b) \in \{(1,1), (1,2), (2,3), (2,5) \}.
\]
\end{Thm}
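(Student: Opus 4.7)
The plan is to invoke the criterion from Section 3 that $\varphi$ initiates a toric Sarkisov link if and only if $-K_T$ lies in the interior of $\Mov(T)$ and every small $\mathbb{Q}$-factorial modification of $T$ remains terminal. Without loss of generality, order the weights so that $1\leq a \leq b$; by Theorem \ref{thm:kawakita}, the hypothesis that $T$ is terminal forces $\gcd(a,b)=1$. Using the description of $\Cox(T)$ from Section 2, one writes down the Mori chamber decomposition
\[
\Nef(T)=\langle H,\, H-E\rangle \;\subset\; \Mov(T)=\langle H,\, H-aE\rangle \;\subset\; \Eff(T)=\langle E,\, H-bE\rangle,
\]
together with $-K_T=4H-(a+b)E$. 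Expressing $-K_T$ in the generators of $\Mov(T)$ produces the condition $-K_T \in \Mov(T)^\circ \iff b<3a$. The case $a=1$ is already handled by Lemma \ref{lem:easycase}, contributing the pairs $(1,1)$ and $(1,2)$.

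Now suppose $a\geq 2$. Then $\Mov(T)$ has a single interior wall at the ray $[H-E]$ generated by $x_1$, and the 2-ray game produces a unique small $\mathbb{Q}$-factorial modification $T \dashrightarrow T_1$, where $T_1$ shares the Cox ring of $T$ but has irrelevant ideal $(u,x_0,x_1)\cap(x_2,x_3)$. Using the affine chart formulas from Section 2, the only charts of $T_1$ that can introduce new singularities are $(x_1x_2\neq 0)$ and $(x_1x_3\neq 0)$, and I would identify them as cyclic quotients of types
\[
\tfrac{1}{a-1}(a-2,a-2,b-1) \quad\text{and}\quad \tfrac{1}{b-1}(b-2,b-2,a-1)
\]
respectively; the remaining charts are either smooth or coincide with charts of $T$ (hence are terminal by Kawakita). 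The problem thus reduces to checking when these two quotients are terminal, via the YPG criterion (Theorem \ref{thm:YPG}).

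For $a=2$, the first quotient is trivial. Using $b-2\equiv -1\pmod{b-1}$, so that $\overline{l(b-2)}=b-1-l$, the YPG sum for $\tfrac{1}{b-1}(b-2,b-2,1)$ at level $l$ equals $2b-2-l$, which exceeds $b-1$ for every $l\in\{1,\ldots,b-2\}$; hence the singularity is terminal for all $b$. Combined with $b<3a=6$ and $\gcd(2,b)=1$ this forces $b\in\{3,5\}$, giving $(2,3)$ and $(2,5)$. The main obstacle is the uniform treatment of $a\geq 3$, which I expect to dispatch by a single clean evaluation of YPG at $l=b-2$ applied to $\tfrac{1}{b-1}(b-2,b-2,a-1)$. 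Indeed, $(b-2)^2\equiv 1\pmod{b-1}$ and $(b-2)(a-1)\equiv -(a-1)\equiv b-a\pmod{b-1}$, so the sum of smallest residues at $l=b-2$ equals $2+(b-a)$. This is $>b-1$ precisely when $a<3$, so for every $a\geq 3$ the singularity $\tfrac{1}{b-1}(b-2,b-2,a-1)$ fails to be terminal and $\varphi$ does not initiate a Sarkisov link. This leaves exactly the four stated pairs.
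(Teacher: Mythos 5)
Your proposal is correct and follows the same architecture as the paper's proof: reduce to $1<a<b$ via Lemma \ref{lem:easycase}, use $\Mov(T)=\mathbb{R}_+[H]+\mathbb{R}_+[H-aE]$ and $-aK_T\sim(3a-b)H+(a+b)(H-aE)$ to get the condition $b<3a$, identify the unique small modification $T\dashrightarrow T'$ across the wall $[H-E]$, and observe that the only new singularities of $T'$ are the two cyclic quotient points; your charts $\tfrac{1}{a-1}(a-2,a-2,b-1)$ and $\tfrac{1}{b-1}(b-2,b-2,a-1)$ are exactly the paper's $P_1=\tfrac{1}{a-1}(-1,-1,b-1)$ and $P_2=\tfrac{1}{b-1}(-1,-1,a-1)$. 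Where you genuinely diverge is the terminality analysis of these points: the paper invokes the structure of $3$-fold terminal cyclic quotients (isolatedness, hence $\gcd(a-1,b-1)=1$, and the dichotomy $-2\equiv 0$ or $b\equiv 2 \pmod{a-1}$) and then runs a divisibility case analysis ($m=1,2,3$ with $m(a-1)=b-2<3a-2$), whereas you apply Theorem \ref{thm:YPG} directly, checking all $l$ for $a=2$ and, for $a\geq 3$, killing every case at once with the single evaluation $l=b-2$, where the Reid--Tai sum is $2+(b-a)\leq b-1$. Your route is shorter and avoids the classification of terminal $3$-fold quotients (at the mild cost of relying on the stated iff form of the YPG criterion, which is unproblematic here since the actions involved have no quasi-reflections), while the paper's case analysis mirrors the method it reuses in the $\mathbb{P}^4$ lemmas; both correctly land on $(a,b)\in\{(1,1),(1,2),(2,3),(2,5)\}$. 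The only loose ends in your write-up are cosmetic: $\gcd(a,b)=1$ is most cleanly justified via Theorem \ref{thm:wtbup} (terminality of $T$) rather than by citing Theorem \ref{thm:kawakita}, and for the ``if'' direction you should say explicitly that for $(2,3)$ and $(2,5)$ all hypotheses of the criterion from Section 3 (namely \cite[Lemma~2.9]{hamidquartic}) are met, so the link indeed exists.
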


\begin{proof}
By Lemma \ref{lem:easycase}, we can assume $1<a<b$. In particular, $b>2$. In this case $T$ is a rank 2 toric variety with weight system
\begin{equation} \label{eq:P3}
\begin{array}{cccc|ccccc}
             &       & u     & x & y & z & v & \\
\actL{T}   &  \lBr &  0 & 1 & 1 & 1 & 1 &   \actR{.}\\
             &       & -1 & 0 & 1 & a & b &  
\end{array}
\end{equation}
Since $1<a<b$ we have
\[
\Mov(T)=\mathbb{R}_+[H]+\mathbb{R}_+[H-aE] \subsetneq  \mathbb{R}_+[E]+ \mathbb{R}_+[H-bE]=\Eff(T).
\]
\begin{figure}[h]
\centering
\begin{tikzpicture}[scale=2]
  \coordinate (A) at (0, 0);
  \coordinate [label={left:$E$}] (E) at (0, -1);
  \coordinate [label={right:$H$}] (K) at (1, 0);
\coordinate [label={right:$H-E$}] (y) at (1,1);
\coordinate [label={right:$H-aE$}] (M2) at (0.4,1.2);
\coordinate [label={above:$H-bE$}] (E') at (0.2,1.3);
\draw[fill=gray!30,draw=none]    (0,0) -- ++(1,0) -- ++(0,1) ;
    \draw  (A) -- (E);
    \draw  (A) -- (y);
    \draw [very thick,color=red] (A) -- (K);
    \draw (0.7,0.35) node{ \tiny $\Nef(T)$} ;
    \draw [very thick,color=red] (A) -- (M2);
	\draw (A) -- (E');
\end{tikzpicture}
\caption{A representation of the Mori chamber decomposition of $T$. The outermost rays generate the cone of pseudo-effective divisors of $T$ and in red it is represented the subcone of movable divisors of $T$.}%
\label{fig:chambdecomplin2}%
\end{figure}
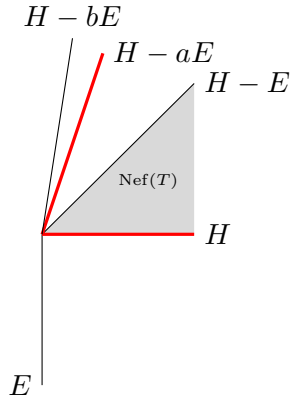 For the Sarkisov link to exist the class of the anticanonical divisor of $T$ must be in the interior of $\Mov(T)$. Since 
\[
-aK_T \sim  (3a-b) H + (a+b)(H-aE)
\]
it follows that $-K_T$ is in the interior of $\Mov(T)$ if and only if $3a>b$.


Let $D'$ be a movable divisor in the interior of the cone generated by the rays $\mathbb{R}_+[H-E] + \mathbb{R}_+[H-aE]$ and suppose $T'$ is the ample model for $D'$, that is, $T' \simeq \Proj R(T,D')$. Then $\Cox(T) = \Cox(T')$ but the irrelevant ideal of $T'$ is $(u,x,y) \cap (z,v)$. By \cite{mdsGIT} there is a small $\mathbb{Q}$-factorial modification $\tau \colon T \dashrightarrow T'$ which we describe explicitly. Let $\mathcal{F}_y$ be the ample model of the divisor $H-E$. Then,
\[
\mathcal{F}_y \simeq \Proj \bigoplus_{m\geq 0} H^0(T,m(H-E)) = \mathbb{C}[y, \ldots, u_i, \ldots ]
\]
where each $u_k$ is a monomial in the ideal $(u,x) \cap (z,v)$. Let $\alpha \colon T \rightarrow \mathcal{F}_y$ be the map given by $(u,x,y,z,v) \mapsto (y, \ldots, u_i, \ldots)$ and define similarly $\alpha' \colon T' \rightarrow \mathcal{F}_y$. Let $p_y \in \mathcal{F}_y$ be the point $(1:0: \ldots :0)$. It is clear that $\alpha$ contracts the locus $L \colon (z=v=0) \subset T$ to $p_y$ and that $\alpha'$ contracts $L' \colon (u=x=0) \subset T'$. Moreover, $L \simeq \mathbb{P}^1$ and $L' \simeq \mathbb{P}(a-1,b-1)$ are contracted to the same point. The small modification $\tau$ replaces $L$ with $L'$ and is denoted by
\[
(1,1,1-a,1-b).
\]

We find conditions on $a$ and $b$ for $T'$ to be terminal. 
Since 3-dimensional terminal singularities are isolated we have $\gcd(a-1,b-1)=1$. These are then the points
\[
P_1 = \frac{1}{a-1}(-1,-1,b-1)\,\, \text{and}\,\, P_2 = \frac{1}{b-1}(-1,-1,a-1)
\]
and both need to be terminal. The point $P_1$ is a terminal singularity if and only if 
\begin{enumerate}
    \item $-2 \equiv 0 \mod{a-1}$ and $\gcd(a-1,b-1)=1$ or
    \item $b \equiv 2 \mod{a-1}$
\end{enumerate}
\paragraph{Case 1.} We have that $a = 2$ or $a=3$. If $a=2$, then $P_1$ is smooth and $P_2=\frac{1}{b-1}(-1,-1,1)$ is terminal for every $b\geq 2$. However, $-K_T \in \Int\Mov(T)$ if and only if $a<b<3a$. Hence $b \in \{3,5 \}$. Notice that if $(a,b)=(2,3)$ the modification $\tau$ is Francia's antiflip. If $a=3$, the point $P_1$ is terminal if and only if $b$ is even. However, the point $P_2 = \frac{1}{b-1}(-1,-1,2)$ is terminal only if $b=2$ (in which case $P_2$ is actually smooth) which contradicts the assumption that $b>a$.

\paragraph{Case 2.} We have $a-1 | b-2$. Since $b<3a$, there is a positive integer $m$ for which 
\[
m(a-1)=b-2 < 3a-2.
\]
 Hence $m \in \{1,2,3\}$. If $m=1$, then $b=a+1$ and the point $P_2=\frac{1}{a}(-1,-1,a-1)\sim \frac{1}{a}(a-1,a-1,a-1)$ is terminal if and only if $a=2$ in which case  $b = 3$. If $m=2$, then $b=2a$ and $\gcd(a,b)=1$ if and only if $a=1$ which contradicts the assumption that $a>1$. If $m=3$, then $b=3a-1$ and
 \[
 P_2=\frac{1}{3a-2}(-1,-1,a-1)\sim \frac{1}{3a-2}(3(a-1),3(a-1),a-1).
 \]
 But $3(a-1) \equiv 1 \bmod 3a-2$ and so
 \[
 P_2\sim \frac{1}{3a-2}(3,3,1)
 \]
 by changing the generator $\epsilon$ of $\bm{\mu}_{3b-2}$ by the automorphism $\epsilon \mapsto \epsilon^{-3}$. By Theorem \ref{thm:YPG}, we have $1<3a-2<7$. Hence $a=2$ and $b=5$.

 \paragraph{} To conclude, the only choices of $(a,b)$ for which both $T$ and $T'$ are terminal and $-K_T \in \Int\Mov(T)$ are 
 \[
 (a,b) \in \{(1,1), (1,2), (2,3), (2,5) \}.
 \]
 By \cite[Lemma~2.9]{hamidquartic}, the map $\varphi'$ initiates a Sarkisov link precisely for these values of $a$ and $b$. 
\end{proof}

\begin{Cor}
Suppose $\varphi$ is the toric $(1,a,b)$-weighted blowup of $\mathbb{P}^3$ at a coordinate point where $(a,b) \in \{ (2,3), (2,5)\}$. Then $\varphi$ initiates a toric Sarkisov link ending with a divisorial contraction to a point in the terminal weighted projective space $\mathbb{P}(1,b,b-1,b-a)$.
\end{Cor}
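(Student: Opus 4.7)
My plan is to continue directly from Theorem \ref{thm:P3}. In both cases that theorem establishes that, after the small modification $\tau\colon T\dashrightarrow T'$, the Nef cone of $T'$ equals $\mathbb{R}_+[H-E]+\mathbb{R}_+[H-aE]$. The second ray is the one that still needs to be contracted to complete the link into an extremal morphism $\varphi'\colon T'\to\mathcal{F}'$. Since the Cox variable $v$ has class $H-bE$ which lies strictly outside $\Mov(T')=\Mov(T)$ but inside $\Eff(T')$, this contraction must be divisorial with exceptional divisor $E':=(v=0)\subset T'$. So the link ends in the ``divisorial'' case of the dichotomy recalled earlier, and it remains to identify $\mathcal{F}'$.

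I will identify $\mathcal{F}'$ by variation of GIT, working first on the open set $U=(v\ne 0)\subset T'$. Using the one-parameter subgroup of $(\mathbb{C}^*)^2$ cut out by $\lambda\mu^b=1$, I normalize $v=1$; the residual $\mathbb{C}^*$ (parametrised by $\nu=\mu^{-1}$) then acts on $(u,x,y,z)$ with weights $(1,b,b-1,b-a)$. On $v\ne 0$ the irrelevant ideal $(u,x,y)\cap(z,v)$ reduces to $(u,x,y)\ne 0$, giving
\[
U\;\cong\;\mathbb{P}(1,b,b-1,b-a)\setminus\{(0{:}0{:}0{:}1)\},
\]
i.e.\ the complement of the coordinate point on the $z$-axis. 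To check that $\varphi'$ extends by sending $E'$ to this missing point, I will take the limit of a family $(u_0,x_0,y_0,z_0,\epsilon)$ as $\epsilon\to 0$: after normalising $v=1$ and then rescaling by the residual $\nu=\epsilon^{1/(b-a)}$, the $u,x,y$ components pick up the positive powers $\epsilon^{1/(b-a)},\epsilon^{a/(b-a)},\epsilon^{(a-1)/(b-a)}$ (all positive since $a\ge 2$), while the $z$-component stabilises at $z_0\ne 0$. Hence the limit is $(0{:}0{:}0{:}1)$, so $E'$ is contracted to a single point and $\mathcal{F}'\cong\mathbb{P}(1,b,b-1,b-a)$.

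Finally I need terminality of $\mathcal{F}'$. Its only candidate singular loci are the three coordinate points of weights $b$, $b-1$, $b-a$, of types $\tfrac1b(1,b-1,b-a)$, $\tfrac1{b-1}(1,b,b-a)$ and $\tfrac1{b-a}(1,b,b-1)$. For $(a,b)=(2,3)$ these are $\tfrac13(1,2,1)$, $\tfrac12(1,1,1)$, and a smooth point (since $b-a=1$); for $(a,b)=(2,5)$ they reduce modulo the acting order to $\tfrac15(1,4,3)$, $\tfrac14(1,1,3)$, $\tfrac13(1,2,1)$. A short, case-by-case application of the Reid--Tai criterion (Theorem \ref{thm:YPG}) to each of these quotients finishes the proof. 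The only delicate step in the whole argument is keeping track of the two $\mathbb{C}^*$-actions during the $v=1$ normalization; once that is done, the residual-quotient picture displays $\mathcal{F}'$ as the claimed weighted projective space directly.
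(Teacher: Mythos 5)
Your proposal is correct and follows essentially the same route as the paper: the final map is the contraction of $T'$ associated to the boundary ray $H-aE$ of $\Mov(T')$, it contracts $E'=(v=0)$ to the coordinate point of $\mathbb{P}(1,b,b-1,b-a)$, and your $v=1$ slice with residual $\mathbb{C}^*$-weights $(1,b,b-1,b-a)$ is just another presentation of the paper's $\GL(2,\mathbb{Z})$ change of grading, reproducing the same map $(u,x,y,z,v)\mapsto(uv^{\frac{1}{b-a}}:xv^{\frac{a}{b-a}}:yv^{\frac{a-1}{b-a}}:z)$. The only difference is cosmetic: you add an explicit Reid--Tai check of terminality of the target (harmless, and valid since the weights are pairwise coprime so only coordinate points are singular), while the paper goes a bit further than the stated corollary by identifying the contraction as a $(1,2,1)$-Kawakita blowup for $(a,b)=(2,3)$ and a Kawamata blowup of $\frac{1}{3}(1,2,1)$ for $(a,b)=(2,5)$.
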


\begin{proof}

Consider the map $\varphi' \colon T' \rightarrow \mathcal{F}$ given by the sections $\bigoplus_{m\geq 0}H^0(T',m(H-aE))$. The 3-fold $\mathcal{F}$ is isomorphic to the image of $\mathbb{P}^3(1,b,b-1,b-a)$ via the Veronese embedding given by the complete linear system $|\mathcal{O}(b-a)|$. We write $\varphi'$ explicitly.  Recall that $T'$ is the ample model of a divisor in the interior of the cone $\mathbb{R}_+[H-E]+\mathbb{R}_+[H-aE]$. Let
\[
A = \begin{pmatrix}
  a-1 & 1\\ 
  b-1 & 1
\end{pmatrix} \in \GL(2,\mathbb{Z}). 
\]
Multiplying the weight system of $T'$ on the left by $A$ yields an isomorphic toric variety to $T'$ given by
\begin{equation*}
\begin{array}{ccccc|cccc}
             &       & u     & x & y & z & v & \\
\actL{T'}   &  \lBr &  1 & a & a-1 & 0 & -(b-a) &   \actR{.}\\
             &       & 1 & b & b-1 & b-a & 0 &  
\end{array}
\end{equation*}
The hyperplane section $E'  := T'|_{v=0}$ is isomorphic to the weighted plane $\mathbb{P}(1,a,a-1)$. The map $\varphi'$ given by the sections with are multiples of the big divisor $H-aE$ is 
\begin{align*}
    \varphi' \colon T' &\rightarrow \mathbb{P}(1,b,b-1,b-a) \\
    (u,x,y,z,v) &\mapsto (uv^{\frac{1}{b-a}},xv^{\frac{a}{b-a}},yv^{\frac{a-1}{b-a}},z).
\end{align*}
The map $\varphi'$ is a divisorial contraction to the point $\mathbf{p}=(0:0:0:1) \in \mathbb{P}(1,b,b-1,b-a)$. If $(a,b)=(2,3)$, the map $\varphi'$ is a $(1,2,1)$-Kawakita blowup of the smooth point $\mathbf{p}$ with exceptional divisor isomorphic to $\mathbb{P}(1,1,2)$. If $(a,b)=(2,5)$, the map $\varphi'$ is a $\frac{1}{3}(1,2,1)$-Kawamata blowup of the terminal cyclic quotient singularity $\mathbf{p} \sim \frac{1}{3}(1,2,1)$ with exceptional divisor isomorphic to $\mathbb{P}(1,1,2)$.
\end{proof}

\begin{table}[ht!]
    \centering
    \begin{tabular}[t]{l c c c}
        \toprule
        $(a,b)$  & $\tau$ & $\varphi'$             &  Model     \\ \midrule

 $(1,1)$ &    & Fibration &   $\mathbb{P}^1$-bundle over $\mathbb{P}^2$   \\

 $(1,2)$ &    & Divisorial Contraction to $\mathbb{P}^1$ & $\mathbb{P}(1,1,1,2)$   \\

 $(2,3)$ &  $(1,1,-1,-2)$  & $(1,1,2)$-Weighted blowup of a smooth point &  $\mathbb{P}(1,1,2,3)$   \\
        
 $(2,5)$ &  $(1,1,-1,-4)$  & Kawamata blowup of $\frac{1}{3}(1,1,2)$  &  $\mathbb{P}(1,3,4,5)$  \\

        \bottomrule
    \end{tabular}
\caption{Table summarising the results obtained. The first column denotes the weights of the $(1,a,b)$-Kawakita blowup of a coordinate point. The second column denotes a terminal small $\mathbb{Q}$-factorial modification. The third column is the last birational morphism and the last column denotes the new model of $\mathbb{P}^3$. }
\label{tab:fibrations}
\end{table}

\subsection{Sarkisov links from $\mathbb{P}^4$}

We look at the same problem for extractions from a point in $\mathbb{P}^4$. This is much more complicated than the previous case but our methods still allow for a definitive result. 

\begin{Lem} \label{lem:auxP41}
Let $\varphi \colon T \rightarrow \mathbb{P}^4$ be the toric $(1,1,1,d)$-weighted blowup of a point. Then $\varphi$ initiates a toric Sarkisov link from $\mathbb{P}^4$ if and only if $d\in \{1,2\}$.
\end{Lem}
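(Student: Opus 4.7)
The plan is to deduce this directly from Lemma \ref{lem:easycase}, since a $(1,1,1,d)$-weighted blowup of $\mathbb{P}^4$ at a point is precisely the case $(1,\dots,1,b)$ of that lemma in ambient dimension $4$, with $b=d$. The statement of Lemma \ref{lem:easycase} already asserts that such a $\varphi$ initiates a toric Sarkisov link if and only if $b\in\{1,2\}$, so the real content of the argument here is to verify numerically that the hypotheses apply in this specific dimension.

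First I would write down the weight system of $T$ and observe that three of the four weights are equal to $1$. In the notation of the paper this means $\alpha_{d-1}=1$, and hence
\[
\Nef(T)=\Mov(T)=\mathbb{R}_+[H]+\mathbb{R}_+[H-E].
\]
In particular no small $\mathbb{Q}$-factorial modification enters the picture: the two-ray game consists of $\varphi$ on one extremal ray and a morphism on the opposite extremal ray, and no terminality check on an SQM arises. Thus the only condition for a Sarkisov link initiated by $\varphi$ to exist is that $-K_T$ lies in the interior of $\Mov(T)$.

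Next I would compute $-K_T$ using the toric formula from the proof of Theorem \ref{thm:weak}, obtaining $-K_T\sim 5H-(d+2)E$, and expand in the nef basis as $-K_T\sim (3-d)H+(d+2)(H-E)$. Both coefficients are strictly positive precisely when $d<3$, i.e., $d\in\{1,2\}$. For these two values the link is exactly the one already exhibited in Lemma \ref{lem:easycase}: a conic bundle structure over $\mathbb{P}^3$ when $d=1$, and a divisorial contraction to $\mathbb{P}^2\subset\mathbb{P}(1,1,1,1,2)$ when $d=2$.

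There is essentially no obstacle in this argument: because $\Nef(T)=\Mov(T)$, no terminal SQM has to be produced and the whole proof reduces to a one-line cone computation. The only subtle point, and the reason $d=3$ is excluded despite $T$ still being weak Fano by Theorem \ref{thm:weak}, is that for $d=3$ the class $-K_T$ lies on the boundary ray $\mathbb{R}_+[H-E]$; this produces a Mori fibre space contraction (or a crepant contraction onto a hyperplane section) but not a Sarkisov link initiated by $\varphi$ in the sense of \cite[Lemma~2.9]{hamidquartic}.
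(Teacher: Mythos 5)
Your proof is correct and takes essentially the same route as the paper, whose entire proof of this lemma is a citation of Lemma~\ref{lem:easycase}: your computation $-K_T\sim(3-d)H+(d+2)(H-E)$ together with $\Nef(T)=\Mov(T)=\mathbb{R}_+[H]+\mathbb{R}_+[H-E]$ (so no small modification needs a terminality check) is precisely the specialisation of that lemma to ambient dimension $4$. One inessential slip in your closing aside: for $d=3$ the class $H-E$ lies in the interior of $\Eff(T)$, hence is big, so the second extremal contraction is a crepant divisorial contraction rather than a Mori fibre space, but this does not affect the argument since the exclusion of $d=3$ rests only on $-K_T$ lying on the boundary of $\Mov(T)$.
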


\begin{proof}  See Lemma \ref{lem:easycase}.
\end{proof}

\begin{Lem} \label{lem:auxP42}
Let $\varphi \colon T \rightarrow \mathbb{P}^4$ be the toric $(1,1,c,d)$-weighted blowup of a point with $1<c\leq d$. Then $\varphi$ initiates a toric Sarkisov link from $\mathbb{P}^4$ if and only if $(c,d)\in \{(2,d)\, |\, 2 \leq d \leq 6 \}$.
\end{Lem}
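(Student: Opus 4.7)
The plan is to parallel the proof of Theorem \ref{thm:P3}, using variation of GIT and a singularity analysis on the small modification induced by crossing the wall $H-E$. Under the hypothesis $1<c\le d$, the variety $T$ is the rank $2$ toric variety
\begin{equation*}
\begin{array}{cccc|ccccc}
             &       & u     & x & y & z & t & v & \\
\actL{T}   &  \lBr &  0 & 1 & 1 & 1 & 1 & 1 &  \actR{.}\\
             &       & -1 & 0 & 1 & 1 & c & d &
\end{array}
\end{equation*}
Hence $\Nef(T)=\mathbb{R}_+[H]+\mathbb{R}_+[H-E]$ and $\Mov(T)=\mathbb{R}_+[H]+\mathbb{R}_+[H-cE]$. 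Writing
\[
-cK_T \sim (4c-d-1)\,H + (c+d+1)(H-cE),
\]
the class $-K_T$ lies in the interior of $\Mov(T)$ if and only if $d\le 4c-2$.

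Between the rays $H-E$ and $H-cE$ there are no intermediate walls (no Cox generator has slope strictly between $1$ and $c$), so the only small modification in the link is a single map $\tau\colon T\dashrightarrow T_1$ across the wall $H-E$. The variety $T_1$ is the Mori-dream-space chamber with Cox ring equal to that of $T$ and irrelevant ideal $(u,x,y,z)\cap(t,v)$, and $\tau$ replaces the surface $L\colon(t=v=0)\simeq\mathbb{P}^1\times\mathbb{P}^1\subset T$ with $L'\colon(u=x=0)\simeq\mathbb{P}^1\times\mathbb{P}(c-1,d-1)\subset T_1$. After $\tau$, the link ends with a divisorial contraction (if $c<d$) or with a fibration (if $c=d$) along the other boundary of $\Mov(T)$.

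The decisive step is terminality of $T_1$. Away from $L'$ the variety $T_1$ coincides with $T$, so I only need to check the toric fixed points of $T_1$ lying on $L'$. A direct stabilizer computation on the affine patches $(yt\ne0)$ and $(zt\ne0)$ yields the local model $\frac{1}{c-1}(-1,-1,0,d-1)\simeq \frac{1}{c-1}(-1,-1,d-1)\times\mathbb{A}^1$, and on $(yv\ne0)$, $(zv\ne0)$ the model $\frac{1}{d-1}(-1,-1,c-1)\times\mathbb{A}^1$. Since product with $\mathbb{A}^1$ preserves terminality of a cyclic quotient singularity (the sum criterion of Theorem \ref{thm:terminalsing} is unchanged by an added $0$-weight), terminality of $T_1$ reduces to asking when both $\frac{1}{c-1}(-1,-1,d-1)$ and $\frac{1}{d-1}(-1,-1,c-1)$ are terminal $3$-fold cyclic quotient singularities.

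A direct application of Theorem \ref{thm:terminalsing} shows that $\frac{1}{r}(-1,-1,s)$ is terminal precisely when $s\equiv 1\pmod r$; thus $T_1$ is terminal if and only if $c-1\mid d-2$ and $d-1\mid c-2$. Because $2\le c\le d$ gives $0\le c-2<d-1$, the second divisibility forces $c=2$. The bound $d\le 4c-2=6$ together with $d\ge c=2$ then yields exactly $d\in\{2,3,4,5,6\}$, and Theorem \ref{thm:terminalsing} applied to $\frac{1}{2}(1,1,1,d)$ and $\frac{1}{d}(d-1,1,1,2)$ (the singular points of $T$ obtained from Theorem \ref{thm:wtbup}) confirms $T$ itself is terminal in each case. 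The main technical obstacle is the explicit identification of the local singularity types along $L'$ via the Cox-ring affine chart analysis; once these are recorded, everything else reduces to an elementary arithmetic argument.
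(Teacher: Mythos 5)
Your proposal is correct and follows essentially the same route as the paper: one wall-crossing inside $\Mov(T)$, the interior condition $4c-d-1>0$, and terminality of the modified model $T_1$ checked via the transverse cyclic quotient types $\frac{1}{c-1}(-1,-1,d-1)$ and $\frac{1}{d-1}(-1,-1,c-1)$, forcing $c=2$ and $2\le d\le 6$; your unified criterion that $\frac{1}{r}(-1,-1,s)$ is terminal iff $s\equiv 1\pmod r$ is a clean repackaging of the paper's case-by-case analysis (the paper splits into $c=d$ and $c<d$ and only needs the $\frac{1}{d-1}$ curve), and it does follow directly from Theorem \ref{thm:terminalsing}. One slip, harmless to the argument: the contracted surface $L=(t=v=0)\subset T$ is the Hirzebruch surface $\mathbb{F}_1$ (the strict transform of a plane through the blown-up point), not $\mathbb{P}^1\times\mathbb{P}^1$, and $L'=(u=x=0)\subset T_1$ is a $\mathbb{P}(c-1,d-1)$-bundle over $\mathbb{P}^1$ that is a product only in the case $c=d=2$; since your terminality check is carried out on the local quotient models in the affine charts, this misidentification does not affect the classification, but it should be corrected in the write-up, and the terminality of $T$ itself for the surviving weights $(1,1,2,d)$ should be recorded explicitly rather than asserted.
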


\begin{proof}
We have $-K_T \sim 5H - (d+c+1)E$ and divide the proof in two subcases:
\paragraph{Suppose $1<c=d$.}
The Movable cone of $T$ is subdivided in two chambers. Let $T$ and $T'$ be the ample models of each chamber. Then, there is a small $\mathbb{Q}$-factorial modification $\tau$ between $T$ and $T'$. The weight system of $T$ is
\begin{equation*}
\begin{array}{cccc|ccccc}
             &       & u     & x & y & z & t & v & \\
\actL{T}   &  \lBr &  0 & 1 & 1 & 1 & 1 & 1 &  \actR{.}\\
             &       & -1 & 0 & 1 & 1 & d & d &
\end{array}
\end{equation*}
Let $\mathcal{F}=\Proj \bigoplus_{m\geq0}H^0(T,\mathcal{O}(m(H-E)))=\Proj\bigoplus_{m\geq0}H^0(T',\mathcal{O}(m(H-E)))$ and $\alpha \colon T \rightarrow \mathcal{F},\,\ \beta \colon T' \rightarrow \mathcal{F}$ the associated contractions. The exceptional locus of $\alpha$ is $T|_{t=v=0}$ which is isomorphic to $\mathbb{F}_1$. On the other hand the exceptional locus of $\beta$ is $S:=T'|_{u=x=0}$. Notice that if $d>2$,  $T'$ contains a surface of singularities. Hence $d=2$, since we want $T'$ to be terminal. Therefore $S \simeq \mathbb{P}^1\times \mathbb{P}^1$.

We conclude that the map $\tau$ fits into the diagram 
\begin{equation*}
    \xymatrix{
    \mathbb{F}_1 \subset T \ar@{-->}[rr]^\tau \ar[dr]_{\alpha} && T' \supset \mathbb{P}^1\times\mathbb{P}^1 \ar[dl]^{\beta} \\
    & \mathbb{P}^1\subset \mathcal{F} &
    }
\end{equation*}
where $\alpha$ and $\beta$ are the small contraction
\[
(u,x,y,z,t,v) \mapsto (y,z,ut,uv,xt,xv).
\]
 Hence, the map $\tau$ swaps the two degree 8 del Pezzo surfaces: the Hirzebruch surfaces $\mathbb{F}_1$ and $\mathbb{P}^1\times \mathbb{P}^1$. This is a 4-dimensional analog to the Atyiah flop where, over each point of the base, $\mathbb{P}^1\subset \mathcal{F}$, we have a 3-fold Atyiah flop. Indeed, the 4-fold $\mathcal{F}$ is isomorphic to
\[
(u_1u_4-u_2u_3=0) \subset \mathbb{P}(1,1,1,1,2,2)
\]
with homogeneous variables $y,\,z,\,u_1,\,u_2,\,u_3,\,u_4$. Finally the map 
\begin{align*}
\varphi' \colon T' &\longrightarrow \Proj \bigoplus_{m\geq 0} H^0(T',m(H-2E)) \simeq \mathbb{P}^1   \\
(u,x,y,z,t,v) &\longmapsto (t: v)
\end{align*}
is a fibration whose fibres are isomorphic to $\mathbb{P}(1,1,1,2)$. Notice that $H-2E$ is not big.

\paragraph{Suppose $1<c<d$.} We have $-K_T \in \Int\Mov(T)$ if and only if $4c>d+1$. Let $T'$ be the ample model in the Mori chamber adjacent to $\Nef(T)$. Then $T$ and $T'$ are related by a small $\mathbb{Q}$-factorial modification $\tau$. The surfaces $S:=T|_{t=v=0}=\mathbb{F}_1$ and 
\begin{equation*}
\begin{array}{cccc|ccccc}
           &       & y     & z & t & v  & \\
\actL{S'}   &  \lBr &  1 & 1 & 1 & 1  &  \actR{}\\
           &       & 0 & 0 & 1-c & 1-d & 
\end{array}
\end{equation*}
 are both $\mathbb{P}^1$-bundles over $\mathbb{P}^1$ and $\tau|_S$ is the composition of these fibrations. Moreover $\tau$ is a fibre-wise small modification, that is, for each point $p \in \mathbb{P}^1$, the fibre $\alpha^{-1}(p) = \mathbb{P}^1$ in $S$ is replaced with the corresponding fibre $\beta^{-1}(p)=\mathbb{P}(c-1,d-1)$ in $S'$. 
 
 Suppose that $c=2$. In that case $\tau$ introduces the line of singularities 
 \[
 \Gamma \sim\frac{1}{d-1}(-1,-1,1) \subset S'
 \]
 which is clearly terminal for any $d>1$. Suppose on the other hand that $c>2$. We claim that $T'$ is not terminal for any $d>c$. We use the observation that $\tau$ is a fibre-wise small modification. Over each point of the base of $\tau$ we extract the line of singularities
  \[
 \Gamma \sim\frac{1}{d-1}(-1,-1,c-1) \subset S'
 \]
 and $\Gamma$ is terminal if and only if
\begin{itemize}
    \item $-2 \equiv 0 \bmod{d-1}$ (and $\gcd(b-1,c-1)=1)$ or
    \item $c-2 \equiv 0 \bmod{d-1}$.
\end{itemize}
For the first case $d-1 | 2$ which implies that $d \in \{2,3\}$. This is not possible since we assumed $d>c>2$. For the second case, notice that there is $m \in \mathbb{Z}_+$ for which $m(d-1)=c-2 < d-2$ since $c<d$ which is impossible. Hence, if $c>2$, the modification $\tau$ introduces a line of non-terminal singularities over each point of the base. Hence $S'$ is not terminal and we conclude that the small modification $T \dashrightarrow T'$ is terminal if and only if $c=2$ as we claimed.

Since $c=2$, it follows that $d \in \{3, 4, 5, 6 \}$.  Consider the map $\varphi' \colon T' \rightarrow \mathcal{F}$ where
\[
\mathcal{F}=\Proj \bigoplus_{m\geq 0} H^0(T',m(H-2E)) \simeq \mathbb{P}(1,d,d-1,d-1,d-2).
\]
The map $\varphi'$ is a divisorial contraction to the point $\mathbf{p}'=(0:0:0:0:1) \in \mathbb{P}(1,d,d-1,d-1,d-2)$. The point $\mathbf{p}'$ is smooth if and only if $d=3$ and is a terminal cyclic quotient singularity of type $\frac{1}{d-2}(1,d,d-1,d-1)$ otherwise. Notice that in this case, $H-2E$ is a big divisor.

\end{proof}

\begin{Lem} \label{lem:auxP43}
Let $\varphi \colon T \rightarrow \mathbb{P}^4$ be the toric $(1,b,c,d)$-weighted blowup of a point with $1<b\leq c\leq d$, where one of the inequalities is an equality. Then $\varphi$ initiates a toric Sarkisov link from $\mathbb{P}^4$ if and only if 
\[
(b,c,d)\in \{(2,3,3),(2,5,5), (2,2,3), (2,2,5),(3,3,4), (3,3,8)\}.
\]
\end{Lem}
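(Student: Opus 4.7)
The plan is to run the two-ray game in the style of Lemma~\ref{lem:auxP42}, splitting according to which non-strict inequality in $1<b\le c\le d$ is actually equality: (i) $b=c=d$, (ii) $b=c<d$, or (iii) $b<c=d$. Sub-case (i) is dispatched immediately: the affine chart $(x_0 x_2\ne 0)\cong\frac{1}{b}(-1,1,b,b)$ splits as $\frac{1}{b}(b-1,1)\times\mathbb{A}^2$, a 2-dimensional Du~Val $A_{b-1}$-singularity transverse to a smooth factor, and is not terminal for $b>1$.

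For sub-case (ii), $\Mov(T)=\mathbb{R}_+[H]+\mathbb{R}_+[H-bE]$ has a single interior wall $H-E$, so the link consists of one small $\mathbb{Q}$-factorial modification (SQM) $\tau\colon T\dashrightarrow T'$ followed by a divisorial contraction $T'\to\mathcal{F}'$ from $|H-bE|$, which is big since $b<d$. The anticanonical condition $-K_T\in\Int\Mov(T)$ becomes $d<3b$, and terminality of $T$ at the chart $(x_0 x_2\ne 0)\cong\frac{1}{b}(b-1,1,d)\times\mathbb{A}^1$ forces $\gcd(b,d)=1$ via Morrison--Stevens. I would then describe $T'$ (same Cox ring as $T$, irrelevant ideal $(u,x,y)\cap(z,t,v)$) and verify its terminality on the new charts: $(yz\ne 0)$ produces the transverse 3-fold cyclic quotient singularity $\frac{1}{b-1}(b-2,b-2,d-1)$ and $(yv\ne 0)$ produces the isolated 4-fold cyclic quotient singularity $\frac{1}{d-1}(d-2,d-2,b-1,b-1)$. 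Morrison--Stevens on the former and Reid--Tai on the latter, combined with $b<d<3b$ and $\gcd(b,d)=1$, leave only $(b,d)\in\{(2,3),(2,5),(3,4),(3,8)\}$.

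For sub-case (iii), $\Mov(T)=\mathbb{R}_+[H]+\mathbb{R}_+[H-cE]$ contains two interior walls $H-E$ and $H-bE$, so the link comprises two SQMs $\tau_1\colon T\dashrightarrow T_1$ and $\tau_2\colon T_1\dashrightarrow T_2$ and ends in a fibration $T_2\to\mathbb{P}^1$, since $H-cE$ lies on the boundary of $\Eff(T)$ and hence is not big. The anticanonical condition is automatic from $b<c$, and terminality of $T$ forces $\gcd(b,c)=1$. Terminality of $T_1$ is checked chart by chart: $(yt\ne 0)$ yields $\frac{1}{c-1}(c-2,c-2,b-1)\times\mathbb{A}^1$ and Morrison--Stevens forces $b=2$, while $(xz\ne 0)$ yields $\frac{1}{2}(1,1,c,c)$ and forces $c$ odd. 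After running $\tau_2$, the chart $(zt\ne 0)$ of $T_2$ produces the transverse 3-fold cyclic quotient singularity $\frac{1}{c-2}(c-3,c-4,c-3)$; Reid--Tai then leaves only $c\in\{3,5\}$, so $(b,c)\in\{(2,3),(2,5)\}$.

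The principal obstacle is the bookkeeping in sub-case (iii): terminality must be verified at each of two intermediate models $T_1$ and $T_2$ by cataloguing several new affine charts and checking Morrison--Stevens or Reid--Tai on the transverse 3-fold cyclic quotient singularities they contain. The interplay of these arithmetic conditions with the coprimality requirements and the tight anticanonical range conspires to pin down the short list stated in the lemma.
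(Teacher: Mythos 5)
Your proposal is correct and follows essentially the same route as the paper: the same case split ($b=c<d$ versus $b<c=d$, with $b=c=d$ excluded by non-terminality of $T$), the same movable-cone chamber structure with anticanonical bounds $d<3b$ (respectively automatic), and the same singularity data — your chart computations $\frac{1}{b-1}(b-2,b-2,d-1)$, $\frac{1}{d-1}(d-2,d-2,b-1,b-1)$, $\frac{1}{c-1}(c-2,c-2,b-1)$ and $\frac{1}{c-2}(c-3,c-4,c-3)$ are exactly the paper's $\frac{1}{b-1}(-1,-1,d-1)$, $\frac{1}{d-1}(-1,-1,b-1,b-1)$, $\frac{1}{c-1}(-1,-1,b-1)$ and $\frac{1}{c-2}(-1,-2,-1)$, yielding the same six triples. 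The only cosmetic difference is that you read off the new singularities from affine charts of the ample models and cite Morrison--Stevens/Reid--Tai by name, whereas the paper describes them via the loci extracted by the small modifications and its Theorem \ref{thm:YPG}.
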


\begin{proof}
Since $T$ is terminal it follows that $\gcd(b,c,d)=1$. Otherwise the exceptional divisor contains a surface of singularities. In particular at least one of the inequalities in $b\leq c \leq d$ is strict.

We have $-K_T = 5H-(b+c+d)E$.

\paragraph{Suppose $1<b<c=d$.} The following holds
\[
\Mov(T)= \mathbb{R}_+[H]+\mathbb{R}_+[H-cE] \subsetneq \mathbb{R}_+[E]+\mathbb{R}_+[H-cE] = \Eff(T). 
\]

The anticanonical divisor of $T$ is in the interior of $\Mov(T)$ since $-K_T = 2H+2(H-cE)+H-bE$ where $H-bE$ is in the interior of $\Mov(T)$ by the assumpton $b<c$. The movable cone of $T$ is subdivided into three chambers.  
\begin{align*}
    \Nef(T) &= \mathbb{R}_+[H]+\mathbb{R}_+[H-E] \\
    \Nef(T_1) &= \mathbb{R}_+[H-E]+\mathbb{R}_+[H-bE]\\
    \Nef(T_2) &= \mathbb{R}_+[H-bE]+\mathbb{R}_+[H-cE].
\end{align*}

There is a sequence of small $\mathbb{Q}$-factorial modifications $T \dashrightarrow T_1 \dashrightarrow T_2$. The contraction $\alpha \colon T \rightarrow \mathcal{F}$ where $\mathcal{F}=\Proj \bigoplus_{m\geq 0} H^0(T,m(H-E))$ has exceptional locus isomorphic to $\mathbb{P}^1$ and contracts it to a point in $\mathcal{F}$. On the other side we have $\alpha' \colon T' \rightarrow \mathcal{F}$ whose exceptional locus is isomorphic to $\mathbb{P}(b-1,c-1,c-1)$.  We claim that $T'$ is terminal if and only if $b=2$. Notice that $\gcd(b-1,c-1)=1$ and $T'$ has an extra curve of singularities $\Gamma$ of type $\frac{1}{c-1}(-1,-1,b-1)$. Hence, if $b=2$, the map $T \dashrightarrow T'$ adds a curve of singularities which is of type $\frac{1}{c-1}(-1,-1,1)$ and therefore $T'$ is terminal. On the other hand, suppose $T'$ is terminal. Notice that $\Gamma$ is terminal if and only if 
\begin{itemize}
    \item $-2 \equiv 0 \pmod{c-1}$ (and $\gcd(b-1,c-1)=1)$ or
    \item $b-2 \equiv 0 \pmod{c-1}$.
\end{itemize}
For the first case $c-1 | 2$ which implies that $c \in \{2,3\}$. We assumed $c>b>1$ and so $c=3$ and $b=2$. For the second case, notice that there is $m \in \mathbb{Z}_+$ for which $m(c-1)=b-2 < c-2$ since $b<c$ which is impossible. We conclude that the small modification $T \dashrightarrow T'$ is terminal if and only if $b=2$ as we claimed.

We now proceed to the second small modification modification $T' \dashrightarrow T''$. We consider 
\[
\mathcal{F}'=\Proj \bigoplus_{m\geq 0}H^0(T',m(H-bE))
\]
and $\beta \colon T' \rightarrow \mathcal{F}',\, \beta' \colon T'' \rightarrow \mathcal{F}'$ given by the sections of multiples of the movable divisor $H-bE$. Then, $\beta$ contracts the surface $\mathbb{P}(1,1,2)$ to a point in $\mathcal{F}$ and $\beta'$ extracts the line $\mathbb{P}(c-2,c-2)$ introducing the line of singularities 
\[
L \sim \frac{1}{c-2}(-1,-2,-1)
\]
in $T''$. By terminality, it follows that $c-2\, |\, -2$ and $\gcd(2,c-2)=1$ or $c-2\, |\, -3 $. In the first case, $c =3$ and $L$ is smooth in $T''$. In the second case, $c \in \{3,5\}$ and $L \sim \frac{1}{3}(1,1,2)$. We conclude that if $1<b<c=d$ the 4-folds $T,\, T'$ and $T''$ are terminal if and only if 
\[
(b,c,d) \in \{(2,3,3),(2,5,5) \}.
\]

\paragraph{Suppose $1<b=c<d$.} We have
\[
\Mov(T)= \mathbb{R}_+[H]+\mathbb{R}_+[H-bE] \subsetneq \mathbb{R}_+[E]+\mathbb{R}_+[H-dE] = \Eff(T)
\]
and 
\[
-bK_T \sim (3b-d)H+(2b+d)(H-bE).
\]
Hence, it is in the interior of $\Mov(T)$ if and only if $d < 3b$. The movable cone of $T$ is subdivided into two chambers.  
\begin{align*}
    \Nef(T) &= \mathbb{R}_+[H]+\mathbb{R}_+[H-E] \\
    \Nef(T') &= \mathbb{R}_+[H-E]+\mathbb{R}_+[H-bE].
\end{align*}
Hence there is a small modification $\tau \colon T \dashrightarrow T'$ that we determine: Consider the movable divisor $H-E$ and $\mathcal{F}=\Proj \bigoplus_{m\geq 0}H^0(T,m(H-E))$. Let $\alpha \colon T \rightarrow \mathcal{F}$ and $\alpha' \colon T' \rightarrow \mathcal{F}$ be the associated contractions. The exceptional loci of $\alpha$ and $\alpha'$ are $\mathbb{P}^1$ and $\mathbb{P}(b-1,b-1,d-1)$, respectively. Hence $\gcd(b-1,d-1)=1$. Therefore $\tau$ introduces a curve of singularities 
\[
\Gamma \sim \frac{1}{b-1}(-1,-1,d-1)
\]
and a point $p \sim \frac{1}{d-1}(-1,-1,b-1,b-1)$.
The curve $\Gamma$ is terminal if and only if 
\begin{enumerate}
    \item $-2 \equiv 0 \pmod{b-1}$ (and $\gcd(b-1,d-1)=1)$ or 
    \item $d-2 \equiv 0 \pmod{b-1}$.
\end{enumerate}

\hspace{0.5cm} \textbf{Case 1.} We have $b \in \{2, 3 \}$. Since $d<3b$ we have the following possibilities
\[
(b,d) \in \{(2,3), (2,5), (3,4), (3,8) \}.
\]
The point $p$ is terminal in each of these cases.

\hspace{0.5cm} \textbf{Case 2.} By assumption there is a positive integer $m$ for which
\[
m(b-1)=d-2<3b-2.
\]
Therefore $m \in \{1,2,3 \}$. If $m=1$, then $b=d-1$ and $\Gamma$ is terminal for every $d$. However the point $p \sim \frac{1}{d-1}(1,1,1,1)$, where $d>2$ is terminal if and only if $d \in \{3,4 \}$. The possibilities are therefore
\[
(b,d) \in \{(2,3),(3,4) \}.
\]
If $m=2$, then $d=2b$ and $\gcd(b,c,d) > 1$. 
If $m=3$, then $d=3b-1$. The curve $\Gamma$ is terminal. On the other hand,
\[
p \sim \frac{1}{3b-2}(-1,-1,b-1,b-1) \sim \frac{1}{3b-2}(3(b-1),3(b-1),b-1,b-1).
\]
But $(3b-5)(b-1) \equiv 1 \bmod{3b-2}$ and so,
\[
p \sim \frac{1}{3b-2}(3,3,1,1)
\]
by changing the generator $\epsilon$ of $\bm{\mu}_{3b-2}$ by the automorphism $\epsilon \mapsto \epsilon^{3b-5}$.
Now it is clear that $1< 3b-2 <  8$, that is, $b \in \{2,3 \}$. The possibilities are
\[
(b,d) \in \{(2,5), (3,8) \}.
\]
 We conclude that if $1<b=c<d$ the 4-folds $T$ and $T'$ are terminal if and only if 
\[
(b,c,d) \in \{(2,2,3),(2,2,5), (3,3,4), (3,3,8) \}.
\]
\end{proof}

\begin{Lem} \label{lem:auxP44}
Let $\varphi \colon T \rightarrow \mathbb{P}^4$ be the toric $(a,b,c,d)$-weighted blowup of a point with $1<a\leq b< c= d$. Then $\varphi$ initiates a toric Sarkisov link from $\mathbb{P}^4$ if and only if 
\[
(a,b,c,d)\in \{(2,3,5,5)\}.
\]
Moreover, the Sarkisov link ends with a fibration to $\mathbb{P}^1$ whose fibres are isomorphic to $\mathbb{P}(1,2,3,5)$.
\end{Lem}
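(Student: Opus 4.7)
The strategy is to adapt the case analysis of Lemmas \ref{lem:auxP42} and \ref{lem:auxP43} to weights $(a,b,c,c)$. Since $\alpha_{d-1}=\alpha_d=c$ we are in the fibration case of the dichotomy following Lemma \ref{lem:easycase}, so $\Mov(T)=\mathbb{R}_+[H]+\mathbb{R}_+[H-cE]$. Writing $-K_T\sim 5H-(a+b+2c-1)E$ in these generators shows that the coefficient of $H$ is $(3c-a-b+1)/c>0$ under $1<a\le b<c$, so $-K_T\in\Int\Mov(T)$ automatically; the Sarkisov link criterion \cite[Lemma~2.9]{hamidquartic} therefore reduces to verifying that $T$ and each intermediate small model is terminal. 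Theorem \ref{thm:wtbup} combined with Theorem \ref{thm:YPG} already forces $\gcd(a,b,c)=1$ for $T$ to be terminal, since otherwise the exceptional divisor $\mathbb{P}(a,b,c,c)$ carries a positive-dimensional singular locus.

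First I would dispose of the sub-case $a=b$. There the movable cone has only one internal wall $H-aE$, and the unique small modification $T\dashrightarrow T'$ produces a chart of type $\frac{1}{c-a}(-1,-a,0,0)$; applying Theorem \ref{thm:YPG} directly shows this 4-fold cyclic quotient is never terminal when $c-a\ge 2$, forcing $c=a+1$. But for weights $(a,a,a+1,a+1)$ the terminality test on $\frac{1}{4a+1}(a,a,a+1,a+1)$ fails at $l=4a-2$ (the residue sum equals $4a-2<4a+1$) whenever $a\ge 2$, so $T$ itself is non-terminal and this branch gives no link.

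In the main sub-case $1<a<b<c$ the movable cone splits into three Nef chambers separated by $H-aE$ and $H-bE$, yielding two small modifications $\tau_1\colon T\dashrightarrow T_1$ and $\tau_2\colon T_1\dashrightarrow T_2$ before the final fibration. Following the pattern of Lemma \ref{lem:auxP43}, chart-by-chart computation should show that $\tau_1$ replaces the curve $(x_2=x_3=x_4=0)\simeq\mathbb{P}^1\subset T$ by the surface $\mathbb{P}(b-a,c-a,c-a)\subset T_1$, while $\tau_2$ replaces the surface $\mathbb{P}(1,b,b-a)\subset T_1$ by the curve $\mathbb{P}(c-b,c-b)\simeq\mathbb{P}^1\subset T_2$. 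Terminality of $T_1$ and $T_2$ reduces to checking each affine patch: the charts with the repeated weights of $x_3,x_4$ produce strata $\frac{1}{c-a}(-1,-a,b-a,0)$ in $T_1$ and $\frac{1}{c-b}(-1,-b,a-b,0)$ in $T_2$, whose transverse 3-fold slices $\frac{1}{c-a}(-1,-a,b-a)$ and $\frac{1}{c-b}(-1,-b,a-b)$ must be terminal by Theorem \ref{thm:kawamata}. This forces each to have the form $\frac{1}{r}(1,s,r-s)$, i.e.\ one of the pair-sums $\{a+1,\,b-a-1,\,b-2a\}$ must vanish modulo $c-a$, and one of $\{b+1,\,b-a+1,\,2b-a\}$ modulo $c-b$. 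Combined with $\gcd(a,b,c)=1$, $a<b<c$, and the terminality of the remaining isolated 4-fold charts via Theorem \ref{thm:YPG}, a finite arithmetic enumeration should sift these conditions down to the unique triple $(a,b,c)=(2,3,5)$.

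Finally, for $(a,b,c,d)=(2,3,5,5)$ the base $\mathcal{F}=\Proj\bigoplus_{m\ge 0}H^0(T_2,m(H-5E))$ is $\Proj\mathbb{C}[x_3,x_4]\simeq\mathbb{P}^1$ and the map $\varphi'\colon T_2\rightarrow\mathbb{P}^1$ sends $(u,x_0,x_1,x_2,x_3,x_4)\mapsto(x_3:x_4)$. Normalising $x_3=1$ in a fibre reduces the two $\mathbb{C}^*$-actions to a single residual $\mathbb{C}^*$ on $(u,x_0,x_1,x_2)$ with weights $(1,5,3,2)$, giving generic fibre $\mathbb{P}(1,2,3,5)$. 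I expect the main obstacle to be the arithmetic enumeration in the main sub-case: each individual divisibility condition is mild, but crossing the $(c-a)$-type and $(c-b)$-type conditions with $\gcd(a,b,c)=1$ and verifying that no triple outside $(2,3,5)$ survives all the chart terminality checks requires the same kind of systematic case splitting used in Lemma \ref{lem:auxP43}.
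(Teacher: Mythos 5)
Your setup (movable cone with walls at $H-aE$, $H-bE$, the two antiflips, the identification of the flipped loci and of the new curve singularities with transverse types $\frac{1}{c-a}(-1,-a,b-a)$ and $\frac{1}{c-b}(-1,-b,a-b)$, and the fibre computation $\mathbb{P}(1,c-b,c-a,c)$) is correct and matches the paper, and your treatment of the sub-case $a=b$ is sound, though by a different route than the paper: you kill $c-a\ge 2$ via the chart $\frac{1}{c-a}(-1,-a,0,0)$ of $T_1$ and then $c=a+1$ via Reid--Tai on $T$ at $l=4a-2$, whereas the paper argues from the fibre $\mathbb{P}(1,c,c-a,c-a)$ of the resulting Mori fibre space; both work. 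The genuine gap is in the main sub-case $1<a<b<c=d$: you never carry out the classification, and the two congruence conditions you do write down (one of $\{a+1,\,b-a-1,\,b-2a\}\equiv 0 \pmod{c-a}$ and one of $\{b+1,\,b-a+1,\,2b-a\}\equiv 0\pmod{c-b}$) are nowhere near enough to produce a \emph{finite} enumeration: for instance every triple $(a,a+1,c)$ with $(c-a-1)\mid(a+2)$ satisfies both, so infinitely many candidates survive your stated conditions. The constraint that actually bounds the problem is the one you omit: the chart $(x_0x_3\neq 0)$, present already on $T$ (and on $T_1$, $T_2$), is a \emph{curve} singularity of type $\frac{1}{c}(-1,a,b,0)$ --- equivalently the $\frac{1}{c}$ point of the fibre $\mathbb{P}(1,c-b,c-a,c)$, which is how the paper phrases it --- and its terminality with $1<a<b<c$ forces $a+b\equiv 0\pmod{c}$, hence $c=a+b$. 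Once $c=a+b$ your two conditions become (using $\gcd(a,b)=1$) $b=a+1$ and then $a\mid 2$, giving $(a,b,c,d)=(2,3,5,5)$ directly; without it, the promised ``finite arithmetic enumeration'' does not exist as described. Your phrase ``terminality of the remaining isolated 4-fold charts'' does not cover this, since that chart is non-isolated and is precisely the decisive one.

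Two smaller points. First, the fact you need for the transverse 3-fold slices is the classification of three-dimensional terminal cyclic quotient singularities as $\frac{1}{r}(1,s,r-s)$ with the coprimality conditions (the ``terminal lemma''), not Theorem \ref{thm:kawamata}, which classifies divisorial contractions from such points; also the coprimality of the unpaired weights with $r$ should be recorded, as the paper does in the analogous steps of Lemma \ref{lem:auxP43}. Second, $\gcd(a,b,c)=1$ is weaker than what terminality of $T$ requires: a curve of quotient singularities is permissible in a terminal 4-fold, so the correct statement is that the \emph{pairwise} gcd's with the repeated weight, $\gcd(a,c)$ and $\gcd(b,c)$, must be $1$ (a surface of singularities being the obstruction), and after $c=a+b$ one also gets $\gcd(a,b)=1$; your argument later quietly uses these stronger facts. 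Finally, for the ``if'' direction you should still verify that for $(2,3,5,5)$ all charts of $T$, $T_1$, $T_2$ are terminal (and that $-K_T\in\Int\Mov(T)$, which you did observe is automatic), so that \cite[Lemma~2.9]{hamidquartic} applies.
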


\begin{proof}
Since $T$ is terminal it follows that $\gcd(b,c)=1$. Otherwise the exceptional divisor contains a surface of singularities. We have $-K_T = 5H-(a+b+c+d-1)E$.

\paragraph{Suppose $1<a=b<c=d$.} The following holds
\[
\Mov(T)= \mathbb{R}_+[H]+\mathbb{R}_+[H-cE] \subsetneq \mathbb{R}_+[E]+\mathbb{R}_+[H-cE] = \Eff(T). 
\]

The anticanonical divisor of $T$ is in the interior of $\Mov(T)$ since $-cK_T = (3c-2a+1)H+(2a+2c-1)(H-cE)$ and $a<c$ by assumption. The movable cone of $T$ is subdivided into two chambers.  
\begin{align*}
    \Nef(T) &= \mathbb{R}_+[H]+\mathbb{R}_+[H-aE] \\
    \Nef(T_1) &= \mathbb{R}_+[H-aE]+\mathbb{R}_+[H-cE]
    \end{align*}
Hence, there is a small $\mathbb{Q}$-factorial modification $\tau \colon T \dashrightarrow T_1$. The divisor $H-cE$ is in the boundary of $\Eff(T)$ and so it is not big. Let $\mathcal{F}=\Proj \bigoplus_{m\geq 0}H^0(T,m(H-cE)) = \mathbb{P}^1$ and $\varphi' \colon T \rightarrow \mathbb{P}^1$ be the associated contraction. This is a fibration whose fibres are isomorphic to $\mathbb{P}(1,c,c-a,c-a)$. By terminality we have $a=1$ and $c=2$ contradicting the assumption that $a>1$. See lemma \ref{lem:auxP42}, where the case of a $(1,1,2,2)$-weighted blowup is treated. 

\paragraph{Suppose $1<a<b<c=d$.} The following holds
\[
\Mov(T)= \mathbb{R}_+[H]+\mathbb{R}_+[H-cE] \subsetneq \mathbb{R}_+[E]+\mathbb{R}_+[H-cE] = \Eff(T). 
\]

The anticanonical divisor of $T$ is in the interior of $\Mov(T)$ if and only if $3c>a+b-1$ since $-cK_T = (3c-a-b+1)H+(a+b+2c-1)(H-cE)$. The movable cone of $T$ is subdivided into three chambers.  
\begin{align*}
    \Nef(T) &= \mathbb{R}_+[H]+\mathbb{R}_+[H-aE] \\
    \Nef(T_1) &= \mathbb{R}_+[H-aE]+\mathbb{R}_+[H-bE]\\
    \Nef(T_2) &= \mathbb{R}_+[H-bE]+\mathbb{R}_+[H-cE]
    \end{align*}
Hence, there is a small $\mathbb{Q}$-factorial modification $\tau \colon T \dashrightarrow T_1 \dashrightarrow T_2$. The divisor $H-cE$ is in the boundary of $\Eff(T)$ and so it is not big. Let $\mathcal{F}=\Proj \bigoplus_{m\geq 0}H^0(T,m(H-cE)) = \mathbb{P}^1$ and $\varphi' \colon T \rightarrow \mathbb{P}^1$ be the associated contraction. This is a fibration whose fibres are isomorphic to $\mathbb{P}(1,d-b,d-a,d)$. By terminality and the assumption that $a>1$ we have $a=2,\,b=3,\,c=5,\,d=5$. 
\end{proof}

\paragraph{The remaining cases.} Given the vast number of possibilities for the weights of the weighted blowup of $\mathbb{P}^4$, we end the classification with the help of the computer. To be clear the remaining cases are the weighted blowups of weights $(a,b,c,d)$ where 
\begin{enumerate}
    \item $1<a\leq b \leq c < d$, with exactly one equality.
    \item $0<a<b<c<d$. 
\end{enumerate}
The main idea of the proof is the use of the classification of terminal $\mathbb{Q}$-factorial Fano 4-fold weighted projective spaces due to Kasprzyk, in \cite[Theorem~3.5]{WPS4}. A much more general statement is due to Birkar and asserts that for every $d \in \mathbb{N}$, the family of $d$-dimensional Fano varieties with terminal singularities is bounded. See \cite{BirkarI,BirkarII}. 

If the toric weighted blowup of a point in $\mathbb{P}^4$ initiates a Sarkisov link to a Fano 4-fold, then it must be a weighted projective space. More concretely, if the toric $(a,b,c,d)$-weighted blowup initiates a Sarkisov link to a Fano 4-fold $X'$, then $X'$ is isomorphic to $\mathbb{P}(1,d-c,d-b,d-a,d)$.

\begin{Lem} \label{lem:auxP45}
Let $\varphi \colon T \rightarrow \mathbb{P}^4$ be the toric $(a,b,c,d)$-weighted blowup of a point with $1<a \leq  b\leq c< d$ with exactly one equality.  Then $\varphi$ initiates a toric Sarkisov link from $\mathbb{P}^4$ if and only if 
\[
(a,a,c,d) \in \{(2,2,3,5),(2,2,3,7),(3,3,4,5),(3,3,4,10),(4,4,5,7),(5,5,6,8) \}
\]
up to permutation.
\end{Lem}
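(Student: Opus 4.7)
The plan is to follow the same template established in Lemmas \ref{lem:auxP42}--\ref{lem:auxP44}, but split according to which equality among $a\leq b\leq c$ holds. Concretely, I would separately treat the sub-case $1<a=b<c<d$ and the sub-case $1<a<b=c<d$; the case $c=d$ was already handled in Lemma \ref{lem:auxP44}.

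In each sub-case I would first write down the weight matrix of $T$ as in \eqref{eq:P3} and read off
\[
\Mov(T)=\mathbb{R}_+[H]+\mathbb{R}_+[H-cE]\subsetneq \mathbb{R}_+[E]+\mathbb{R}_+[H-dE]=\Eff(T),
\]
decomposing $\Mov(T)$ into the chain of Nef chambers whose interior walls are the rays $H-aE$, $H-bE$, $H-cE$ (with the wall collapsing in the sub-case $a=b$ or $b=c$). For each wall I would identify the ample model, describe the associated small $\mathbb{Q}$-factorial modification $\tau_i\colon T_i\dashrightarrow T_{i+1}$ explicitly via the contractions to $\mathcal{F}_i=\Proj\bigoplus_{m\geq 0}H^0(T_i,m(H-\alpha_i E))$, read off the type of the exceptional loci on either side as weighted projective spaces $\mathbb{P}(\alpha_i-1,\ldots)$, and write down the cyclic quotient singularities introduced along the curve/point loci extracted by $\tau_i$. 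As in the previous lemmas, terminality of each $T_i$ then translates via Theorems \ref{thm:wtbup} and \ref{thm:YPG} into a short list of congruence conditions on $(a,b,c,d)$ (the two alternatives $-2\equiv 0$ or shifted-weight $\equiv 0$ modulo some $\alpha_i-1$, after renormalising the cyclic group generator).

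Next I would impose $-K_T\in\Int\Mov(T)$: from
\[
-K_T\sim 5H-(a+b+c+d-1)E
\]
and the computation $-cK_T=(5c-a-b-c-d+1)H+(a+b+c+d-1)(H-cE)$ this becomes $4c>a+b+d-1$, bounding $d$ linearly in terms of $(a,b,c)$. Combined with $c<d$, this turns the problem into a finite search in each sub-case. To cut the search down further I would exploit the fact, stated right before the lemma, that if the link terminates in a Fano $4$-fold then the target $X'\cong\mathbb{P}(1,d-c,d-b,d-a,d)$ must be a terminal Fano weighted projective space, and invoke Kasprzyk's classification \cite[Theorem~3.5]{WPS4} to restrict the admissible $(d-c,d-b,d-a,d)$ to a finite explicit list; the same bounding argument applies when the link ends with a fibration, using that the generic fibre must be a terminal weighted projective $3$-fold (classified in dimension three).

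Finally I would intersect the congruence/terminality constraints on the intermediate $T_i$ with the candidate list coming from the classification of the endpoint model, running this intersection explicitly (or, as the author notes, with the Maple script of Section~\ref{sec:code}). In the sub-case $a<b=c<d$ this intersection turns out to be empty, while in the sub-case $a=b<c<d$ it yields exactly the six quadruples
\[
(2,2,3,5),(2,2,3,7),(3,3,4,5),(3,3,4,10),(4,4,5,7),(5,5,6,8),
\]
up to permutation. The main obstacle I expect is the bookkeeping across the (up to three) successive flips: each $\tau_i$ can introduce a curve or point of cyclic quotient singularities whose terminality must be checked after renormalising the action, and ensuring that no case is lost (in particular the ones where a wall degenerates because of the equality among weights) is where the argument is most delicate; this is precisely the step where the computer enumeration backstops the case analysis.
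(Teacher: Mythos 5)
Your overall strategy is the same as the paper's: split into the sub-cases $1<a=b<c<d$ and $1<a<b=c<d$, compute $\Mov(T)$ and its two Nef chambers, impose $-K_T\in\Int\Mov(T)$ (your inequality $4c>a+b+d-1$ specialises correctly to the paper's $4c>2a+d-1$ and $3b>a+d-1$), describe the single anti-flip explicitly, test terminality of $T$ and $T'$ with Theorems \ref{thm:wtbup} and \ref{thm:YPG} (i.e.\ Algorithms \ref{terminalBlowupQ} and \ref{terminalWPSQ}), and cut the search down via Kasprzyk's classification of terminal Fano weighted projective $4$-spaces applied to the target $\mathbb{P}(1,d-c,d-b,d-a,d)$. (One small slip: since $c<d$ the divisor generating the far wall of $\Mov(T)$ is big, so no fibration case can arise in this lemma; your hedge about terminal weighted projective $3$-fold fibres is superfluous here.)

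The genuine problem is your asserted outcome in the sub-case $1<a<b=c<d$: you claim the intersection of constraints is empty, but you never run the enumeration, and the paper's own proof of this lemma finds exactly two quadruples there, $(a,b,b,d)\in\{(2,3,3,4),(2,5,5,6)\}$, each initiating a link ending in a divisorial contraction centred at $\mathbb{P}^1$ in a terminal weighted projective space. This is easy to confirm for $(2,3,3,4)$: the blowup is terminal because $\frac{1}{11}(2,3,3,4)$ is terminal, $-K_T$ is interior since $3b=9>a+d-1=5$, the anti-flip $(-1,-2,1,1,2)$ only introduces a curve of $\frac{1}{2}(1,1,1)$ points, and the target $\mathbb{P}(1,1,1,2,4)$ is terminal. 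Note also that the count $421$ in Theorem \ref{thm:2} is only reached if this lemma contributes eight quadruples ($2+5+6+1+8+399=421$), so the six-element list in the statement appears to omit these two cases; your ``empty'' conclusion looks reverse-engineered from that (incomplete) statement rather than derived from the terminality analysis. Since the entire content of the lemma is the explicit list, and your proposal defers the decisive enumeration to the computer while stating an outcome that the checks do not support, this is a real gap: you would need to actually carry out (or at least spot-check, as above) the terminality computations in both sub-cases, at which point the $b=c$ branch is seen to be non-empty.
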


\begin{proof}
We use the classification of $\mathbb{Q}$-factorial terminal weighted projective spaces of dimension four in \cite{WPS4}. As we observed, if the toric $(a,b,c,d)$-weighted blowup of a point in $\mathbb{P}^4$ initiates a Sarkisov link to $X$ then $X$ is isomorphic to $\mathbb{P}(1,d-c,d-b,d-a,d)$.
\paragraph{Suppose $1<a=b<c<d$.} 
We have $-K_T = 5H-(2a+c+d-1)E$. Moreover, $-K_T$ is in the interior of the movable cone of $T$ if and only if $4c>2a+d-1$ since
\[
\Mov(T)= \mathbb{R}_+[H]+\mathbb{R}_+[H-cE] \subsetneq \mathbb{R}_+[E]+\mathbb{R}_+[H-dE] = \Eff(T) 
\]
and $-cK_T=(4c-2a-d+1)H+(2a+c+d-1)(H-cE)$. The movable cone of $T$ is subdivided into two chambers.  
\begin{align*}
    \Nef(T) &= \mathbb{R}_+[H]+\mathbb{R}_+[H-aE] \\
    \Nef(T') &= \mathbb{R}_+[H-aE]+\mathbb{R}_+[H-cE]
\end{align*}
And let $\tau\colon T \dashrightarrow T_1$ be the induced small $\mathbb{Q}$-factorial modification. Then, $\tau$ fits into the diagram
\begin{equation*}
    \xymatrix{
    S \subset T \ar@{-->}[rr]^\tau \ar[dr]_{\alpha} && T' \supset S' \ar[dl]^{\beta} \\
    & \mathbb{P}^1\subset \mathcal{F} &
}
\end{equation*}
where $S$ and $S'$ are geometrically ruled surfaces contracted to $\mathbb{P}^1$. More concretely,
\begin{equation*}
\begin{array}{cccc|ccccc}
           &       & u     & x & y & z  & \\
\actL{S}   &  \lBr &  0 & 1 & 1 & 1  &  \actR{}\\
           &       & 1 & a & 0 & 0 & 
\end{array}
\quad
\begin{array}{cccc|ccccc}
           &       & y     & z & t & v  & \\
\actL{S'}   &  \lBr &  1 & 1 & 1 & 1  &  \actR{.}\\
           &       & 0 & 0 & a-c & a-d & 
\end{array}
\end{equation*}
Given a terminal $\mathbb{Q}$-factorial weighted projective space $\mathbb{P}(1,d-c,d-a,d-a,d)$, we consider its associated toric  $(a,a,c,d)$-weighted blowup  $\varphi \colon T \rightarrow \mathbb{P}^4$ and use Algorithm \ref{terminalBlowupQ} to understand which $\varphi$ are in the Mori category.  These are the following:
\[
(a,a,c,d) \in \{(2,2,3,5),(2,2,3,7),(3,3,4,5),(3,3,4,10),(4,4,5,7),(5,5,6,8) \}.
\]
For each of the six tuples it is easy to check terminality of $T'$. Hence, $T$ and $T'$ are terminal and we consider the map $\varphi' \colon T' \rightarrow \mathbb{P}(1,d-c,d-a,d-a,d)$ given by the sections which are multiplies of the big divisor $H-cE$. The map $\varphi$ is a divisorial contraction to a terminal cyclic quotient singularity of type $\frac{1}{d-c}(1,d,d-a,d-a)$.
\paragraph{Suppose $1<a<b=c<d$.} 
Given a terminal $\mathbb{Q}$-factorial terminal weighted projective space $\mathbb{P}(1,d-b,d-b,d-a,d)$ we consider its associated toric $(a,b,b,d)$-weighted blowup $\varphi \colon T \rightarrow \mathbb{P}^4$. By the assumption that $0<a<b=c<d$ we have
\[
\Mov(T)= \mathbb{R}_+[H]+\mathbb{R}_+[H-bE] \subsetneq \mathbb{R}_+[E]+\mathbb{R}_+[H-dE] = \Eff(T). 
\]

The anticanonical divisor of $T$ is in the interior of $\Mov(T)$ if and only if $3b>a+d-1$ since $-bK_T = (3b-a-d+1)H+(a+2b+d-1)(H-bE)$. The movable cone of $T$ is subdivided into two chambers:  
\begin{align*}
    \Nef(T) &= \mathbb{R}_+[H]+\mathbb{R}_+[H-aE] \\
    \Nef(T_1) &= \mathbb{R}_+[H-aE]+\mathbb{R}_+[H-bE].
    \end{align*}
Hence, there is a small $\mathbb{Q}$-factorial modification $\tau \colon T \dashrightarrow T_1$ which is the anti-flip
\[
(-1,-a,b-a,b-a,d-a).
\]

We use Algorithm \ref{terminalBlowupQ} to understand which are the $(a,b,b,d)$-weighted blowups in the Mori category. These are the following:
\[
(a,b,b,d) \in \{(2,3,3,4),(2,5,5,6)\}.
\]

For each of the two tuples it is easy to check terminality of $T'$. Hence, $T$ and $T'$ are terminal and we consider the map $\varphi' \colon T' \rightarrow \mathbb{P}(1,d,d-a,d-b,d-b)$ given by the sections which are multiplies of the big divisor $H-bE$. The map $\varphi$ is a divisorial contraction centred at $\mathbb{P}^1 \subset \mathbb{P}(1,d,d-a,d-b,d-b)$.
\end{proof}

\begin{Lem} \label{lem:auxP46}
Let $\varphi \colon T \rightarrow \mathbb{P}^4$ be the toric $(a,b,c,d)$-weighted blowup of a point with $0<a < b< c< d$. Then $\varphi$ initiates a toric Sarkisov link from $\mathbb{P}^4$ if and only if $(a,b,c,d)$ is one of 399 quadruples up to permutation.
\end{Lem}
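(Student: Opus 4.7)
The plan is to parallel the strategy used in Lemma \ref{lem:auxP45}, but in the case where all four weights are pairwise distinct, which produces a three-chamber decomposition of $\Mov(T)$ instead of two. First I would set up the basic geometry: with $0<a<b<c<d$, terminality of $T$ forces $\gcd(a,b,c,d)=1$ (else the exceptional divisor contains a surface of singularities), and one has
\[
\Mov(T) = \mathbb{R}_+[H] + \mathbb{R}_+[H-cE] \subsetneq \mathbb{R}_+[E] + \mathbb{R}_+[H-dE] = \Eff(T).
\]
The three Nef chambers are spanned by consecutive pairs among $H,\,H-aE,\,H-bE,\,H-cE$, producing a chain $T \dashrightarrow T_1 \dashrightarrow T_2$ of small $\mathbb{Q}$-factorial modifications followed by the divisorial contraction $\varphi' \colon T_2 \rightarrow X'$ given by multiples of $H-cE$. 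Since $c < d$ the divisor $H-cE$ is big, so $X'$ is a Fano $4$-fold.

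Next I would extract the numerical condition for $-K_T$ to lie in the interior of $\Mov(T)$. Writing $-K_T = 5H - (a+b+c+d-1)E$ in the basis $\{H,\,H-cE\}$ of $\Mov(T)$ gives the inequality $5c > a+b+c+d-1$, i.e.\ $a+b+d < 4c+1$. The crucial observation, exactly as in Lemma \ref{lem:auxP45}, is that the target Fano $4$-fold $X'$ must be isomorphic to the weighted projective space $\mathbb{P}(1,\,d-c,\,d-b,\,d-a,\,d)$. Hence by Kasprzyk's classification of $\mathbb{Q}$-factorial terminal $4$-dimensional weighted projective spaces \cite[Theorem 3.5]{WPS4}, the tuples $(d-c,\,d-b,\,d-a,\,d)$ we need to consider are drawn from a finite, explicit list.

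For each such candidate I would then run through three checks: first, terminality of $T$ itself via Theorem \ref{thm:wtbup}; second, terminality of $T_1$ and $T_2$ by analyzing the curves and points of singularities introduced by each of the two small modifications, combinatorially via the criterion of Theorem \ref{thm:YPG} applied to the exceptional loci of the contractions $T_i \rightarrow \mathcal{F}_i$ and $T_{i+1}\rightarrow \mathcal{F}_i$; and third, the numerical condition $a+b+d < 4c+1$. Each of these is a finite combinatorial check on a tuple of integers, so in principle the proof is algorithmic.

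The main obstacle is simply the sheer number of cases: $399$ quadruples survive every condition. This is the reason the authors have implemented the combined test as Algorithm \ref{terminalBlowupQ} in Maple, as announced in Section \ref{sec:code}. The proof therefore amounts to the execution of this algorithm across the candidates supplied by Kasprzyk's list, together with a verification that each surviving quadruple indeed satisfies every terminality and inequality condition needed for the Sarkisov link to exist. The content of the lemma is the resulting enumeration; no individual case requires a delicate argument beyond Theorem \ref{thm:YPG} and the chamber combinatorics set up above.
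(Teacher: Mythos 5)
Your proposal follows essentially the same route as the paper: the same three-chamber decomposition of $\Mov(T)$ with the chain $T \dashrightarrow T_1 \dashrightarrow T_2$ and final contraction by multiples of $H-cE$, the same interiority inequality (equivalent to $4c>a+b+d-1$), the reduction to Kasprzyk's finite list of terminal weighted projective spaces $\mathbb{P}(1,d-c,d-b,d-a,d)$, and the algorithmic terminality checks (Theorems \ref{thm:terminalsing} and \ref{thm:wtbup}, i.e.\ Algorithms \ref{terminalBlowupQ} and \ref{terminalWPSQ}) on $T$ and on the two anti-flips. This matches the paper's proof, so no further comparison is needed.
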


\begin{proof}
 As we observed, if the toric $(a,b,c,d)$-weighted blowup of a point in $\mathbb{P}^4$ initiates a Sarkisov link to $X$ then $X$ is isomorphic to $\mathbb{P}(1,d-c,d-b,d-a,d)$. By the assumption that $0<a<b<c<d$ we have
\[
\Mov(T)= \mathbb{R}_+[H]+\mathbb{R}_+[H-cE] \subsetneq \mathbb{R}_+[E]+\mathbb{R}_+[H-dE] = \Eff(T). 
\]

The anticanonical divisor of $T$ is in the interior of $\Mov(T)$ if and only if $4c>a+b+d-1$ since $-cK_T = (4c-a-b-d+1)H+(a+b+c+d-1)(H-cE)$. The movable cone of $T$ is subdivided into three chambers:  
\begin{align*}
    \Nef(T) &= \mathbb{R}_+[H]+\mathbb{R}_+[H-aE] \\
    \Nef(T_1) &= \mathbb{R}_+[H-aE]+\mathbb{R}_+[H-bE]\\
    \Nef(T_2) &= \mathbb{R}_+[H-bE]+\mathbb{R}_+[H-cE]
    \end{align*}
Hence, there is a small $\mathbb{Q}$-factorial modification $\tau \colon T \dashrightarrow T_2$ which can be decomposed as
\[
(-1,-a,b-a,c-a,d-a) \quad (-1,-b,a-b,c-b,d-b).
\]

Let $X=\mathbb{P}(1,d-c,d-b,d-a,d)$ and consider the corresponding toric  $(a,b,c,d)$-weighted blowup $\varphi \colon T \rightarrow \mathbb{P}^4$. We check whether $\varphi$ is an extraction in the Mori category with Algorithm \ref{terminalBlowupQ}. 
For each of these we check terminality of the singularities introduced by $\tau$ using Algorithm \ref{terminalWPSQ}.

This is the case for exactly 399 tuples. For each of those tuples, $T,\, T_1$ and $T_2$ are terminal $\mathbb{Q}$-factorial 4-folds and $\varphi' \colon T_2 \rightarrow X$ is a $K_{T_2}$-negative divisorial contraction. 
\end{proof}

The final result of our paper is the following theorem:

\begin{Thm} \label{thm:P4}
Let $\varphi \colon T \rightarrow \mathbb{P}^4$ be the toric $(a,b,c,d)$-weighted blowup of a point. Then $\varphi$ initiates a toric Sarkisov link from $\mathbb{P}^4$ if and only if $(a,b,c,d)$ is one of 421 quadruples up to permutation.
\end{Thm}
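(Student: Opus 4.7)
The plan is to organize the classification by the equality pattern of the sorted weights, using the preceding lemmas as individual cases. Since the statement is up to permutation, I would work throughout with $a\leq b\leq c\leq d$. Terminality of $T$ forces the well-formedness condition that no three of the weights share a common factor greater than $1$, so in particular the pattern $a=b=c>1$ is automatically excluded (as observed in the proofs of Lemmas \ref{lem:auxP43} and \ref{lem:auxP44}), and does not need to be treated as a separate case.

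I would then dispatch the cases one by one. If $a=b=c=1$, Lemma \ref{lem:auxP41} yields $d\in\{1,2\}$, for $2$ tuples. If $a=b=1<c$, Lemma \ref{lem:auxP42} yields $(c,d)\in\{(2,d):2\leq d\leq 6\}$, for $5$ tuples. If $a=1<b\leq c\leq d$ with at least one equality among $b\leq c\leq d$, Lemma \ref{lem:auxP43} gives $6$ tuples. If $a>1$ and $c=d$ (covering both $a=b<c=d$ and $a<b<c=d$), Lemma \ref{lem:auxP44} leaves only $(2,3,5,5)$. If $a>1$ and $c<d$ with exactly one equality in $a\leq b\leq c$, Lemma \ref{lem:auxP45} gives $8$ tuples. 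Finally, the strictly increasing stratum $a<b<c<d$, which collects both the $a=1$ and $a>1$ leftovers of the previous steps, is covered by Lemma \ref{lem:auxP46} with $399$ tuples. These six strata are pairwise disjoint and jointly exhaust every sorted terminal quadruple, so summing gives $2+5+6+1+8+399=421$, matching the claim.

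The easy part of this proof is checking that the stratification is exhaustive and disjoint; the real obstacle is concentrated inside Lemma \ref{lem:auxP46}. Unlike the other strata, the strictly increasing case is too large for hand enumeration and is completed by computer using Algorithms \ref{terminalBlowupQ} and \ref{terminalWPSQ}, which test terminality of both the blowup $T$ and the singularities introduced along the intermediate small $\mathbb{Q}$-factorial modifications. What makes this search finite is that a link initiated by $\varphi$ must terminate (in the divisorial case) at a Fano target of the form $\mathbb{P}(1,d-c,d-b,d-a,d)$, and by Kasprzyk's classification \cite{WPS4} of terminal $\mathbb{Q}$-factorial four-dimensional weighted projective spaces (a special case of Birkar's boundedness theorem) there are only finitely many candidates to examine.
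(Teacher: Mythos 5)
Your proposal is correct and takes essentially the same route as the paper: Theorem \ref{thm:P4} is simply the aggregation of Lemmas \ref{lem:auxP41}--\ref{lem:auxP46} over the equality patterns of the sorted weights, with patterns having three equal weights greater than $1$ excluded by the gcd/terminality observation, and the count $2+5+6+1+8+399=421$, the strict stratum being settled by the computer search of Lemma \ref{lem:auxP46} made finite via Kasprzyk's classification. You also correctly read Lemma \ref{lem:auxP45} as contributing $8$ tuples (the six of shape $(a,a,c,d)$ displayed in its statement plus the two of shape $(a,b,b,d)$ found in its proof), which is what the total $421$ requires.
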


\section{Appendix: Code} \label{sec:code}

In this section we present the pseudo-code that checks terminality at each step of the Sarkisov program. We use them in the proof of Lemmas \ref{lem:auxP45} and \ref{lem:auxP46}. 

The auxiliary algorithm \ref{terminalSingQ} is an implementation of Theorem \ref{thm:terminalsing}.

\begin{algorithm} 
    \caption{Terminality of cyclic quotient singularity}\label{terminalSingQ}
    \hspace*{\algorithmicindent} \textbf{Input} \textit{List $L$; Integer $V$} 
    \begin{algorithmic}[1]
    \Procedure{terminalSingQ}{}
    \State $Q \gets true$
    \State $\textit{l} \gets \textit{length of L}$
    \State $\textit{sumtest} \gets \infty$
    \For {$k=1,\ldots, V-1$}
                \While{$sumtest > V$}
				\State  $sumtest \gets \sum_{i=1}^{l} (L[i]\cdot k \bmod V)$ 
				\If {$sumtest \leq  V$} 
				\State $Q \gets \textit{false}$ 
				\EndIf
				\EndWhile
			\EndFor
    \State $Q$;
    \EndProcedure
    \end{algorithmic}
    \end{algorithm}

\begin{Ex}
\begin{align*}
\text{terminalSingQ}([1,14,13,10],7)&=true\\ \text{terminalSingQ}([1,1,4,3],9)&=false\\\text{terminalSingQ}([-1,3,2],5)&=true; 
\end{align*}
\end{Ex}

Algorithm \ref{terminalBlowupQ} is an implementation of theorem \ref{thm:wtbup}.

\begin{algorithm}
    \caption{Terminality of Weighted blowup of a point}\label{terminalBlowupQ}
    \hspace*{\algorithmicindent} \textbf{Input} \textit{List $L$} 
    \begin{algorithmic}[1]
    \Procedure{terminalBlowupQ}{}
    \State $V  \gets -1 + \sum_{x \in L}x $
    \State $\textit{terminalSingQ(L,V)}$;
    \EndProcedure
    \end{algorithmic}
    \end{algorithm}
    
    \begin{Ex}
\begin{align*}
\text{terminalBlowupQ}([1,3,5])&=true\\ \text{terminalBlowupQ}([2,3,5])&=false\\\text{terminalBlowupQ}([2,3,6,7])&=true; 
\end{align*}
\end{Ex}

The auxiliary algorithm \ref{singList} gives the list of singularity indices of a weighted projective space.

\begin{algorithm}
    \caption{Singularity indices of a Weighted Projective Space}\label{singList}
    \hspace*{\algorithmicindent} \textbf{Input} \textit{List $L$} 
    \begin{algorithmic}[1]
    \Procedure{singList}{}
    \State $L_{aux} \gets \textit{empty list}$
    \State $L_{>1} \gets \textit{sublist of elements of L bigger than 1}$
    \State $L_p \gets \textit{powerset of}\,\,  L_{>1}$
    \State $k \gets 1$
                \While{$k \leq  \textit{length of}\,\, L_{p}$}
				\If {$gcd(L_{p}[k])>1$} 
				\State ${L_{aux} \gets Append\,\, gcd(L_{p}[k])\,\,  to\,\, L_{aux}}$  
				\EndIf
				$k \gets k+1$
				\EndWhile
   \State {$L_{aux}$;}
    \EndProcedure
    \end{algorithmic}
    \end{algorithm}

\begin{Ex}

The weighted projective space $\mathbb{P}(1,1,3,6,8)$ has the following basket of singularities
\[
\mathcal{B}=\bigg\{\frac{1}{6}(1,1,2,3),\frac{1}{8}(1,1,3,6), \frac{1}{2}(1,1,1),\frac{1}{3}(1,1,2)\bigg\}.
\]
The indices of the singularities are $\{2,3,6,8\}$. Indeed,
\[
\text{singList}([1,1,3,6,8])=[8,6,2,3].
\]
\end{Ex}

\begin{Ex}Notice the following behaviour:
\begin{align*}
\text{singList}([7,7,3,6,8])&=[7,8,6,2,3]\\
\text{singList}([-7,-7,3,6,8])&=[8,6,2,3].
\end{align*}
This is necessary because we want to check terminality of small $\mathbb{Q}$-factorial modifications. See Example \ref{ex:nontermflip}.
\end{Ex}

Algorithm \ref{terminalWPSQ} checks if a weighted projective space is terminal. More generally, given a list of integers $L$ it checks whether any \textit{positive} integer in $L$ is an index of a terminal cyclic quotient singularity with weights the elements of $L$. This is the core algorithm used to obtain Theorem \ref{thm:P4}.

\begin{algorithm}
    \caption{Terminality of a Weighted Projective Space}\label{terminalWPSQ}
    \hspace*{\algorithmicindent} \textbf{Input} \textit{List $L$} 
    \begin{algorithmic}[1]
    \Procedure{terminalWPSQ}{}
    \State $L_{aux} \gets \textit{empty list}$
    \State $L_p \gets \textit{powerset of  L}$
    \State $L_{sing} \gets \textit{singList(L)}$
    \State $k \gets 1$
                \While{$k \leq  \textit{length of}\,\, L_{sing}$}
				\If {$terminalSingQ(L,L_{sing}[k])$} 
				\State $L_{aux} \gets \textit{Append 0 to}\,\, L_{aux}$ \Else
				\State $L_{aux} \gets \textit{Append 1 to}\,\, L_{aux}$ 
				\EndIf
				\State $k \gets k+1$
				\EndWhile
    \State \If {$\sum_{x \in L_{aux}} x =0$} 
				\State $true$ \Else
				\State $false$; 
				\EndIf
    \EndProcedure
    \end{algorithmic}
    \end{algorithm}
\begin{Ex} \label{ex:nontermflip}
From Theorem \ref{thm:P3} only four triples - up to permutation - $(1,a,b)$ are the weights of a weighted blowup of $\mathbb{P}^3$ at a point which initiate a Sarkisov link. Indeed take the $(1,3,4)$-weighted blowup of a point. Then,
\begin{equation*}
\begin{array}{cccc|ccccc}
             &       & u     & x & y & z & t  & \\
\actL{T}   &  \lBr &  0 & 1 & 1 & 1 & 1  &  \actR{}\\
             &       & -1 & 0 & 1 & 3 & 4 &
\end{array}
\end{equation*}
is a terminal $\mathbb{Q}$-factorial variety isomorphic in codimension 1 to 

\begin{equation*}
\begin{array}{ccccc|cccc}
             &       & u     & x & y & z & t  & \\
\actL{T'}   &  \lBr &  0 & 1 & 1 & 1 & 1  &  \actR{}\\
             &       & -1 & 0 & 1 & 2 & 7 &
\end{array}
\end{equation*}
The isomorphism in codimension 1 is $\tau \colon T \dashrightarrow T'$ which contracts $\Gamma = \mathbb{P}^1$ and introduces $\Gamma' = \mathbb{P}(2,3)$. On $T'$, $\Gamma'$ has a $\frac{1}{2}(1,1,1)$ terminal singularity and a $\frac{1}{3}(1,1,1)$ canonical singularity.  The fact that there is a non-terminal singularity in $T'$ is captured by
\[
\text{terminalWPSQ}([-1,-1,2,3])=false.\\
\]
\end{Ex}

\bibliography{main}{}
\bibliographystyle{abbrv}

\end{document}